\newtheorem{thm}{Theorem}[section]
\newtheorem{cor}[thm]{Corollary}
\newtheorem{lem}[thm]{Lemma}
\newtheorem{conj}[thm]{Conjecture}
\newtheorem{quest}[thm]{Question}
\theoremstyle{definition}
\newtheorem{defn}[thm]{Definition}
\newtheorem{notn}[thm]{Notation}
\theoremstyle{remark}
\newtheorem{rem}[thm]{Remark}
\let\c@equation\c@thm
\numberwithin{equation}{section}
\title{The Benson - Symonds invariant for permutation modules}
\author{Aparna Upadhyay}
\address{Department of Mathematics\\ University at Buffalo, SUNY \\
244 Mathematics Building\\Buffalo, NY~14260, USA}
\email{aparnaup@buffalo.edu}
\date{23 May 2019}
\subjclass[2010]{Primary 20C30, 20C20, Secondary 05E10}
\keywords{Symmetric group, Permutation module, tensor product}
\begin{document}

\begin{abstract}

In a recent paper, Dave Benson and Peter Symonds defined a new invariant $\gamma_G(M)$ for a finite dimensional module $M$ of a finite group $G$ which attempts to quantify how close a module is to being projective. In this paper, we determine this invariant for permutation modules of the symmetric group corresponding to two-part partitions using tools from representation theory and combinatorics.

\end{abstract}

\maketitle


\section{Introduction}

We determine the gamma invariant as defined by Dave Benson and Peter Symonds in \cite{B_S} for permutation modules of the symmetric group labelled by two-part partitions. We will call this invariant the Benson - Symonds invariant and it will be denoted by $\gamma$. \\

\begin{defn}\cite[Definition 1.1]{B_S} For a $\mathbf{k}G$-module $M$, we write $M=M'\oplus (proj)$ where $M'$ has no projective direct summands and $(proj)$ denotes a projective module. Then $M'$ is called the $core$ of $M$ and denoted $core_G(M)$. We write $c_n^G(M)$ for the dimension of $core_G(M^{\otimes n})$; it is well defined by the Krull-Schmidt Theorem. \\
\end{defn}

\begin{thm} \cite[Theorem 1.2 (i)]{B_S} If $M$ is a finite dimensional module of a finite group $G$ then $$\lim_{n \to \infty} \sqrt[n]{c_n^G(M)}$$ exists.
\end{thm}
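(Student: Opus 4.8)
The plan is to show that the sequence $c_n^G(M)$ is submultiplicative, i.e. $c_{m+n}^G(M)\le c_m^G(M)\,c_n^G(M)$, and then to deduce the existence of the limit from Fekete's subadditive lemma applied to $\log c_n^G(M)$.

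First I would record the standard fact that over the group algebra $\mathbf{k}G$ the tensor product over $\mathbf{k}$ (with diagonal action) of a projective module with an arbitrary finite dimensional module is again projective; this follows from the ``tensor identity'' $\mathbf{k}G\otimes_{\mathbf{k}}N\cong\mathbf{k}G^{\dim N}$ together with the fact that projectives are precisely the summands of free modules. Granting this, write $M^{\otimes m}=core_G(M^{\otimes m})\oplus P$ and $M^{\otimes n}=core_G(M^{\otimes n})\oplus Q$ with $P,Q$ projective, and expand $M^{\otimes(m+n)}\cong M^{\otimes m}\otimes M^{\otimes n}$ by distributing the tensor product over these two direct sums. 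Every summand other than $core_G(M^{\otimes m})\otimes core_G(M^{\otimes n})$ has a tensor factor $P$ or $Q$ and is therefore projective, so by the Krull--Schmidt theorem
$$core_G\big(M^{\otimes(m+n)}\big)\cong core_G\big(core_G(M^{\otimes m})\otimes core_G(M^{\otimes n})\big),$$
and hence $c_{m+n}^G(M)\le\dim\big(core_G(M^{\otimes m})\otimes core_G(M^{\otimes n})\big)=c_m^G(M)\,c_n^G(M)$.

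Next, observe that $c_n^G(M)$ is a nonnegative integer with $c_n^G(M)\le(\dim M)^n$. If $c_N^G(M)=0$ for some $N$, i.e. $M^{\otimes N}$ is projective, then $M^{\otimes n}\cong M^{\otimes N}\otimes M^{\otimes(n-N)}$ is projective for every $n\ge N$, so $c_n^G(M)=0$ and $\sqrt[n]{c_n^G(M)}=0$ for all such $n$, and the limit is $0$. Otherwise $c_n^G(M)\ge 1$ for every $n$, so $a_n:=\log c_n^G(M)$ is a nonnegative subadditive sequence by the previous paragraph, and Fekete's lemma gives $\tfrac1n a_n\to\inf_n\tfrac1n a_n\in[0,\infty)$; exponentiating shows $\sqrt[n]{c_n^G(M)}$ converges to $\inf_n\sqrt[n]{c_n^G(M)}$.

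The only real content is the submultiplicativity estimate, and within it the main point requiring care is the passage from the direct-sum decomposition of $M^{\otimes(m+n)}$ to the displayed isomorphism of cores, which relies on Krull--Schmidt and on the closure of projectives under tensoring; the limit statement itself is then a routine application of Fekete's lemma. It is worth noting that the argument in fact identifies the limit as $\inf_n\sqrt[n]{c_n^G(M)}$, and in particular (taking $m=n-1$, $n=1$ inductively) yields the bound $\lim_n\sqrt[n]{c_n^G(M)}\le c_1^G(M)=\dim core_G(M)\le\dim M$.
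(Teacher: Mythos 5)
Your proof is correct, and it is essentially the argument Benson and Symonds themselves give for \cite[Theorem 1.2(i)]{B_S}: projectives are closed under tensoring (via the tensor identity $\mathbf{k}G\otimes_\mathbf{k} N\cong \mathbf{k}G^{\dim N}$), whence $core_G(M^{\otimes(m+n)})\cong core_G\bigl(core_G(M^{\otimes m})\otimes core_G(M^{\otimes n})\bigr)$ and $c_{m+n}^G(M)\le c_m^G(M)\,c_n^G(M)$, and Fekete's lemma then yields the limit (with the eventually-projective case handled separately). Note the present paper simply quotes this result from \cite{B_S} without reproving it, so the comparison is with the Benson--Symonds argument, which your write-up reproduces faithfully.
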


Benson and Symonds define the invariant $\gamma_G(M)$ to be the above limit and they observe that this invariant satisfies some very interesting properties. It measures the non-projective proportion of $M^{\otimes n}$ in the limit. They compute the invariant for a few examples in \cite{B_S} using the computer algebra system \textbf{Magma}. In fact as  per the definition, this invariant is unlikely to be an integer in most cases. In this paper, we give a closed formula for the invariant of an infinite class of permutation modules. Note that this invariant is not yet known for any class of modules beyond what the authors themselves determined.
\smallskip

\begin{rem} \label{count} 
\begin{enumerate}[(i)]
\item \cite{B_S} The invariant $\gamma_G(M)$ is robust, in that $c_n^G(M)$ may be replaced by the number of composition factors of $core_G(M)$ or the number of composition factors of the socle of $core_G(M)$, and so on.
\item Moreover, if the total number of non-isomorphic indecomposable summands that ever occur in a decomposition of $core_G(M^{\otimes n})$ is finite, then $c_n^G(M)$ may be replaced by the number of indecomposable summands of $core_G(M^{\otimes n})$. This is easy to see by letting $\mathbf{a}$ denote the dimension of the non-projective summand with smallest dimension and $\mathbf{A}$ denote that of the largest. If $d_n^G(M)$ is the number of indecomposable summands of $core_G(M^{\otimes n})$, then $d_n^G(M)\cdot \mathbf{a}\leq c_n^G(M) \leq d_n^G(M)\cdot \mathbf{A}$. The remark follows by taking the limit of the $n$th root. Similarly, if a particular summand has the highest multiplicity for each $n$, then $c_n^G(M)$ can be replaced by the multiplicity of that particular summand. Let $\mathbf{n_0}$ be the total number of non-projective summands that ever occur and $e_n^G(M)$ denote the multiplicity of the summand with highest multiplicity, then $e_n^G(M) \leq d_n^G(M) \leq e_n^G(M) \cdot \mathbf{n_0}$ and the remark follows.
\end{enumerate}

\end{rem}
\smallskip

Our aim is to prove the following theorem:\\
\begin{thm} \label{mytheorem} Let $\lambda =(\lambda_1, \lambda_2)\vdash n$ and $M^\lambda$ be the permutation module of $\mathcal{S}_n$ over a field $\mathbf{k}$ of characteristic $p$, then
$$\gamma_{\mathcal{S}_n}(M^\lambda)={n-p \choose \lambda_1}+{n-p \choose \lambda_2}.$$
\end{thm}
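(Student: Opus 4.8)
\emph{Sketch of the intended argument.} The plan is to show that $\gamma_{\mathcal{S}_n}(M^\lambda)$ equals the dimension of the Brauer quotient of $M^\lambda$ at a cyclic group of order $p$ generated by a $p$-cycle, and then to evaluate that dimension by counting fixed subsets. Write $[n]=\{1,\dots,n\}$ and view $M^\lambda=\mathbf{k}[\mathcal{S}_n/(\mathcal{S}_{\lambda_1}\times\mathcal{S}_{\lambda_2})]$ as the permutation $\mathbf{k}\mathcal{S}_n$-module on the set of $\lambda_1$-subsets of $[n]$; then $M^\lambda$, and hence every tensor power $(M^\lambda)^{\otimes k}$, is again such a permutation module, in particular a $p$-permutation (trivial source) module. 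If $\lambda_1<p$ then $\mathcal{S}_{\lambda_1}\times\mathcal{S}_{\lambda_2}$ is a $p'$-group, so $M^\lambda$ and all its tensor powers are projective, $\gamma_{\mathcal{S}_n}(M^\lambda)=0$, and since then $n-p<\lambda_2\le\lambda_1$ the two binomial coefficients both vanish; so assume henceforth $\lambda_1\ge p$, hence also $n-p\ge\lambda_2\ge 1$. Fix $Q=\langle(1\,2\,\cdots\,p)\rangle$. I will use the standard properties of the Brauer construction $M\mapsto M(Q)$ on $p$-permutation modules: it is additive, it kills projectives (a free module restricted to $Q$ has no fixed points on its standard basis), and it takes tensor products to tensor products, so $(M^\lambda)^{\otimes k}(Q)\cong (M^\lambda(Q))^{\otimes k}$; for an indecomposable $p$-permutation module $S$ with vertex $R$ one has $S(Q)\ne 0$ exactly when $Q$ is $\mathcal{S}_n$-conjugate to a subgroup of $R$, in which case $\dim S(Q)\ge 1$; and there are only finitely many indecomposable $p$-permutation $\mathbf{k}\mathcal{S}_n$-modules up to isomorphism. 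By this last fact and Remark~\ref{count}(ii), I may compute $\gamma_{\mathcal{S}_n}(M^\lambda)$ from the largest multiplicity of a non-projective indecomposable summand of $(M^\lambda)^{\otimes k}$, which is what I track.

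\emph{Bounds.} Write $(M^\lambda)^{\otimes k}=\bigoplus_S a_S(k)\,S$ over indecomposable $p$-permutation modules $S$; there are at most $N$ of them, of dimension at most $D$, for constants $N,D$ independent of $k$. For each non-projective $S$, with vertex $R_S\ne 1$, the isomorphism $(M^\lambda(R_S))^{\otimes k}\cong((M^\lambda)^{\otimes k})(R_S)$ yields $a_S(k)\le a_S(k)\dim S(R_S)\le (\dim M^\lambda(R_S))^{k}$. Summing over non-projective $S$ gives $c_k^{\mathcal{S}_n}(M^\lambda)\le ND\cdot(\max_{R\ne 1}\dim M^\lambda(R))^{k}$, hence $\gamma_{\mathcal{S}_n}(M^\lambda)\le \max_{R\ne 1}\dim M^\lambda(R)$, the maximum over nontrivial $p$-subgroups $R\le\mathcal{S}_n$. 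Conversely $(M^\lambda)^{\otimes k}(Q)\cong (M^\lambda(Q))^{\otimes k}$ is nonzero of dimension $(\dim M^\lambda(Q))^{k}$ and decomposes as $\bigoplus a_S(k)\,S(Q)$ over the non-projective $S$ with $Q$ conjugate into $R_S$; bounding the number of such $S$ and each $\dim S(Q)$ by constants forces $\max_S a_S(k)\ge (\dim M^\lambda(Q))^{k}/\mathrm{const}$, and so $c_k^{\mathcal{S}_n}(M^\lambda)\ge (\dim M^\lambda(Q))^{k}/\mathrm{const}$, whence $\gamma_{\mathcal{S}_n}(M^\lambda)\ge \dim M^\lambda(Q)$.

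\emph{The combinatorial step.} For a nontrivial $p$-subgroup $R\le\mathcal{S}_n$, $\dim M^\lambda(R)$ is the number of $R$-fixed $\lambda_1$-subsets of $[n]$, i.e. the number of unions of $R$-orbits of total size $\lambda_1$. Let the $R$-orbit sizes be $o_1\ge o_2\ge\cdots$ (powers of $p$ summing to $n$, with $o_1\ge p$ since $R\ne 1$). Splitting on whether the first orbit is used, $\dim M^\lambda(R)\le {n-o_1\choose \lambda_1-o_1}+{n-o_1\choose \lambda_1}$, and since $o_1\ge p$ and ${n-o_1\choose \lambda_1-o_1}={n-o_1\choose \lambda_2}$, monotonicity of binomial coefficients gives $\dim M^\lambda(R)\le {n-p\choose \lambda_1}+{n-p\choose \lambda_2}$. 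Equality holds for $R=Q$, whose orbit sizes are $(p,1^{n-p})$: then every subset of the $n-p$ fixed points is a union of orbits, so $\dim M^\lambda(Q)={n-p\choose \lambda_1-p}+{n-p\choose \lambda_1}={n-p\choose \lambda_1}+{n-p\choose \lambda_2}$. Hence $\max_{R\ne 1}\dim M^\lambda(R)=\dim M^\lambda(Q)={n-p\choose \lambda_1}+{n-p\choose \lambda_2}$, and combining with the two bounds above gives $\gamma_{\mathcal{S}_n}(M^\lambda)={n-p\choose \lambda_1}+{n-p\choose \lambda_2}$.

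The main obstacle is the bookkeeping that converts the Brauer-quotient dimension (a count of basis vectors) into information about \emph{multiplicities} of indecomposable summands of $(M^\lambda)^{\otimes k}$: this is what forces the use of the finiteness of the set of trivial-source $\mathbf{k}\mathcal{S}_n$-modules together with the robustness of $\gamma$ in Remark~\ref{count}. Once that is in place, the representation-theoretic input is only the elementary structure theory of $p$-permutation modules, and the remaining combinatorics is a one-line inequality for binomial coefficients. A more direct but longer alternative would expand $(M^\lambda)^{\otimes k}$ into permutation modules by iterated Mackey formulas, resolve these into Young modules via $p$-Kostka numbers, and then appeal to the known vertices of Young modules labelled by two-part partitions; the Brauer-quotient argument sidesteps that machinery.
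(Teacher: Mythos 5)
Your proposal is correct, and it takes a genuinely different route from the paper. The paper invokes Theorem~\ref{maxElt} to reduce to a maximal elementary abelian $p$-subgroup $P$ of rank $k$, explicitly decomposes $M_P$ into cyclic indecomposable summands indexed by $P$-orbits of tabloids, analyses how those summands multiply under $\otimes$ (Lemmas~\ref{dimen}, \ref{non-iso}, Corollary~\ref{whenproj}), and identifies $\gamma_P(M)$ as the sum of dimensions of the summands whose constituent blocks avoid $B_k$ (Theorem~\ref{maxinvt}); converting that sum into ${n-p\choose\lambda_1}+{n-p\choose\lambda_2}$ then requires the chain of binomial identities in the appendix (Theorems~\ref{caser=p}, \ref{caser<p}, \ref{caser>p}). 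You instead stay at the level of $\mathcal{S}_n$ and exploit the Brauer construction for $p$-permutation modules: additivity, vanishing on projectives, tensor-multiplicativity, nonvanishing on the vertex, and finiteness of the set of indecomposable trivial-source modules. Your two bounds give directly that $\gamma_{\mathcal{S}_n}(M^\lambda)=\max_{R\ne1}\dim M^\lambda(R)$, a clean characterization of $\gamma$ for $p$-permutation modules which the paper never states, and the combinatorial evaluation of $\dim M^\lambda(Q)$ is a two-line fixed-point count that bypasses the appendix entirely. What the paper's route buys is more explicit information about the indecomposable summands of $M_P$ and their tensor products (useful in its own right and reused in Section~5 for the comparison $\gamma_E\le\gamma_P$); what yours buys is brevity, a conceptually sharper intermediate statement, and no need for the inclusion--exclusion identities. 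A couple of places could be tightened in a full write-up: (i) the invocation of Remark~\ref{count}(ii) is actually unnecessary since your two displayed bounds already sandwich $c_k^{\mathcal{S}_n}(M^\lambda)$ directly; (ii) in the degenerate case $\lambda_1<p$ you should also dispose of $n<p$ (then $\mathcal{S}_n$ is a $p'$-group and the binomial coefficients vanish by the convention ${m\choose j}=0$ for $m<j$, including $m<0$); and (iii) the sentence ``$\dim S(Q)\ge1$'' for $S$ with vertex $R_S$ and $Q$ conjugate into $R_S$ should be attributed to the Brou\'e correspondence for $p$-permutation modules, since this is the one genuinely nontrivial input.
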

In the next section, we briefly summarize the background results that will be used in the course of this paper. In section 3, we fix $P$ an elementary abelian $p$-subgroup of $\mathcal{S}_n$ of maximum possible rank and consider the restriction of $M$ to $P$. We then investigate the indecomposable summands of this restriction and also determine the multiplicities of each indecomposable summand. We will also understand the properties of these indecomposable summands and their tensor products with each other. In section 4, we will compute the Benson - Symonds invariant for the restriction of $M$ to $P$. Finally in section 5, we will show that the Benson - Symonds invariant for $M$ is in fact equal to the Benson - Symonds invariant for $M$ restricted to $P$. In section 6, we put together some interesting observations and directions for further investigation. In the process, we encountered and proved some very interesting and useful binomial identities which are compiled at the end of the paper in an appendix. \\

\section{Background}

This section aims to introduce some preliminary concepts needed to deduce our main results. 

\begin{thm}\label{maxElt} \cite[Theorem 7.2]{B_S} Let $M$ be a $\mathbf{k} G$-module. Then $$\gamma_G(M)=\max_{E\leq G} \gamma_E(M)$$ where the maximum is taken over the set of elementary abelian $p$-subgroups $E$ of $G$.
\end{thm}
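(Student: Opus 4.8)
\medskip
\noindent\textbf{Proof strategy.}
The inequality $\gamma_E(M)\le\gamma_G(M)$ holds for every subgroup $E\le G$, in particular for every elementary abelian $p$-subgroup. Indeed restriction carries projective $\mathbf{k}G$-modules to projective $\mathbf{k}E$-modules, so writing $M^{\otimes n}=core_G(M^{\otimes n})\oplus(proj)$ and restricting to $E$ exhibits $core_E\big((M\downarrow_E)^{\otimes n}\big)$ as a direct summand, by the Krull--Schmidt Theorem, of $core_G(M^{\otimes n})\downarrow_E$; hence $c_n^E(M)\le c_n^G(M)$ for every $n$, and taking $n$-th roots and passing to the limit gives $\gamma_E(M)\le\gamma_G(M)$. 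Since $G$ has only finitely many elementary abelian $p$-subgroups this already yields $\max_E\gamma_E(M)\le\gamma_G(M)$, and the whole content of the theorem is the reverse inequality.

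For the reverse inequality the plan is to establish an estimate of the shape
$$c_n^G(M)\ \le\ C\cdot n^{d}\cdot\max_E c_n^E(M)\qquad(n\ge 1),$$
the maximum over elementary abelian $p$-subgroups, with constants $C,d$ depending only on $G$ and $\dim M$ but not on $n$. Such a bound finishes the argument formally: taking $n$-th roots gives $c_n^G(M)^{1/n}\le(Cn^d)^{1/n}\cdot\max_E c_n^E(M)^{1/n}$, and since $(Cn^d)^{1/n}\to 1$ and $c_n^E(M)^{1/n}\to\gamma_E(M)$ for each of the finitely many $E$, letting $n\to\infty$ yields $\gamma_G(M)\le\max_E\gamma_E(M)$. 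The first reduction, to a Sylow $p$-subgroup $P$ of $G$, is painless and costs no power of $n$: as $p\nmid[G:P]$ the module $M$ is a direct summand of $\mathrm{Ind}_P^G\mathrm{Res}_P^G M$, and induction and restriction both preserve projectivity, so for any $\mathbf{k}G$-module $N$ the module $core_G(N)$ is a summand of $\mathrm{Ind}_P^G\,core_P(N\downarrow_P)$; putting $N=M^{\otimes n}$ gives $c_n^G(M)\le[G:P]\cdot c_n^P(M)$, and the elementary abelian subgroups of $P$ are among those of $G$. So it suffices to prove the displayed bound when $G=P$ is a $p$-group.

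The $p$-group case is the heart of the matter and where I expect the real difficulty to lie. Qualitatively it is Chouinard's theorem — a $\mathbf{k}P$-module is projective precisely when its restriction to every elementary abelian subgroup is — which itself follows from Quillen's stratification of the cohomology variety; but support varieties are far too coarse to supply the quantitative control demanded here, so one needs a genuinely effective argument. The route I would pursue is to rerun the inductive proof of Chouinard's theorem (induction on $|P|$, splitting off a suitable central subgroup of order $p$) while tracking dimensions: one must show that at each stage the non-projective part of a module shrinks by at most a constant factor, and that the Mackey-type bookkeeping contributes only a polynomial in $n$ once a general $\mathbf{k}P$-module $L$ is specialised to $M^{\otimes n}$, these polynomial factors then being killed by the $n$-th root. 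An alternative, conceptually cleaner but technically more delicate, route is to identify $\gamma_P(L)$ with the spectral radius of the endomorphism $X\mapsto X\otimes core_P(L)$ of the (completed) stable Green ring of $\mathbf{k}P$, and to use the $\pi$-point description of $\mathbf{k}P$-modules to show that this spectral radius is already attained after restriction along a $\pi$-point, hence on an elementary abelian subgroup; the subtlety there is that the relevant Green ring is infinite dimensional whenever $\mathbf{k}P$ has wild representation type, so one has to argue with growth rates directly rather than with a finite-dimensional eigenvalue problem. Either way, the one genuinely new ingredient beyond the existence of $\gamma$ and Quillen's theorem is the uniform control of the dimension of $core_P(M^{\otimes n})$ by its analogues over elementary abelian subgroups, and that is the step I would regard as the main obstacle.
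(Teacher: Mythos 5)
This theorem is quoted by the paper from \cite[Theorem 7.2]{B_S} without proof, so the comparison here is really with the original source rather than with an argument in the paper. Your easy direction is correct: $\gamma_E(M)\le\gamma_G(M)$ follows from the restriction argument you give (it is exactly the cited Lemma 2.10 of \cite{B_S}, reproduced as Theorem 2.2 in the paper), and the Sylow reduction $c_n^G(M)\le[G:P]\cdot c_n^P(M)$ via $M\mid \mathrm{Ind}_P^G\mathrm{Res}_P^G M$ is also sound.

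The reverse inequality, however, is not proved. You correctly isolate the estimate that would suffice, namely $c_n^G(M)\le C\,n^d\max_E c_n^E(M)$ for a $p$-group, but you then only describe two candidate strategies (``the route I would pursue\ldots'', ``an alternative\ldots route'') and explicitly flag the decisive step --- uniform, dimension-level control of $core_P(M^{\otimes n})$ by its elementary abelian analogues --- as an unresolved obstacle. That step is the entire content of the theorem: Chouinard's theorem only detects \emph{vanishing} of the core on elementary abelian subgroups, and, as you yourself observe, support and rank varieties carry no dimension information, so no soft argument upgrades detection of projectivity to the quantitative bound you need. Neither sketch is carried far enough to see that it closes: the ``rerun Chouinard's induction'' route needs a precise formulation and proof of the claim that the non-projective part shrinks by at most a bounded factor when a central subgroup of order $p$ is split off, and the Green-ring/spectral-radius route founders on exactly the infinite-dimensionality in wild type that you mention. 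As written, the proposal establishes only the trivial inequality $\max_E\gamma_E(M)\le\gamma_G(M)$ together with a correct reduction to $p$-groups; the essential inequality must still be taken on faith from \cite{B_S}, so this is a genuine gap rather than an alternative proof.
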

\smallskip

\begin{thm} \cite[Lemma 2.10]{B_S} If $H$ is a subgroup of $G$ and $M$ is a $\mathbf{k}G$-module then $\gamma_H(M) \leq \gamma_G(M)$.
\end{thm}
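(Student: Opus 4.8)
The plan is to argue directly from the definition of $\gamma$ as a limit of $n$th roots of core dimensions, comparing $c_n^H(M)$ with $c_n^G(M)$ for each fixed $n$ and then passing to the limit. The starting observation is that forming tensor powers commutes with restriction: since every tensor product here is taken over the ground field $\mathbf{k}$, the restriction of $M^{\otimes n}$ to $H$ is isomorphic as a $\mathbf{k}H$-module to $(M_H)^{\otimes n}$, where $M_H$ denotes the restriction of $M$ to $H$. Hence $c_n^H(M)=\dim core_H\big((M^{\otimes n})_H\big)$, and it suffices to prove $\dim core_H\big((M^{\otimes n})_H\big)\le \dim core_G(M^{\otimes n})$ for every $n$.

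To establish this, I would write $M^{\otimes n}\cong core_G(M^{\otimes n})\oplus Q$ as $\mathbf{k}G$-modules with $Q$ projective, restrict to $H$, and use the standard fact that the restriction of a projective $\mathbf{k}G$-module is a projective $\mathbf{k}H$-module (because $\mathbf{k}G$ is free as a $\mathbf{k}H$-module). This gives $(M^{\otimes n})_H\cong core_G(M^{\otimes n})_H\oplus Q_H$ with $Q_H$ projective. Decomposing further, $core_G(M^{\otimes n})_H\cong core_H\big(core_G(M^{\otimes n})_H\big)\oplus R$ with $R$ projective over $\mathbf{k}H$, so $(M^{\otimes n})_H$ is the direct sum of the projective-free module $core_H\big(core_G(M^{\otimes n})_H\big)$ and the projective module $R\oplus Q_H$. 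By the Krull--Schmidt theorem the projective-free part is unique, hence $core_H\big((M^{\otimes n})_H\big)\cong core_H\big(core_G(M^{\otimes n})_H\big)$, which is a direct summand of $core_G(M^{\otimes n})_H$. Taking dimensions, $c_n^H(M)\le \dim core_G(M^{\otimes n})_H=\dim core_G(M^{\otimes n})=c_n^G(M)$. Then take $n$th roots and let $n\to\infty$: both limits $\lim_n\sqrt[n]{c_n^H(M)}=\gamma_H(M)$ and $\lim_n\sqrt[n]{c_n^G(M)}=\gamma_G(M)$ exist by \cite[Theorem 1.2(i)]{B_S} applied to $H$ and to $G$ respectively, and the inequality $\sqrt[n]{c_n^H(M)}\le\sqrt[n]{c_n^G(M)}$ for all $n$ is preserved in the limit.

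I do not expect a genuinely hard step here: the entire content is the structural fact that restriction sends projectives to projectives together with the uniqueness of the projective-free summand in the Krull--Schmidt decomposition. The only subtlety worth flagging is that restriction to the smaller group $H$ may create new projective summands, so $core_H\big(core_G(M^{\otimes n})_H\big)$ may be a proper direct summand of $core_G(M^{\otimes n})_H$ rather than the whole thing; but since we only need an inequality, this loss of information is harmless and in fact pushes in the direction we want.
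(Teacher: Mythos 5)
Your argument is correct. The paper itself offers no proof of this statement --- it is quoted directly from \cite[Lemma 2.10]{B_S} --- and what you give is precisely the standard argument behind that lemma: restriction commutes with tensor powers and sends projectives to projectives, so by Krull--Schmidt $core_H\bigl((M^{\otimes n})_H\bigr)$ is a summand of $core_G(M^{\otimes n})_H$, giving $c_n^H(M)\le c_n^G(M)$ for every $n$, and the inequality survives passage to the limit of $n$th roots.
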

\smallskip
Therefore, while using Theorem \ref{maxElt} it is enough to take the maximum over the maximal elementary abelian $p$-subgroups of $G$.\\

Recall the conjugacy classes of maximal elementary abelian $p$-subgroups in $\mathcal{S}_n$. Let $(\mathbb{Z}/p)^n \cong V_n(p) \hookrightarrow \mathcal{S}_{p^n}$ using notations from \cite[p.185]{EltAbBook}. We have the following theorem:

\begin{thm}\label{conjEltAb} \cite[p.185, Theorem 1.3]{EltAbBook} If $n=a_0 + i_1p + i_2 p^2 + ... + i_r p^r$ with $0 \leq a_0 < p$, $i_j \geq 0$ for $1 \leq j \leq r$, then there is a maximal elementary abelian $p$-subgroup of $\mathcal{S}_n$ corresponding to this decomposition 
$$\underbrace{V_1(p) \times ... \times V_1(p)}_{i_1} \times ... \times \underbrace{V_r(p) \times ... \times V_r(p)}_{i_r} $$
$$\subset \underbrace{\mathcal{S}_p \times ... \times \mathcal{S}_p}_{i_1} \times ... \times \underbrace{\mathcal{S}_{p^r} \times ... \times \mathcal{S}_{p^r}}_{i_r} \subset \mathcal{S}_n$$
and as we run over distinct decompositions $(i_1,...,i_r)$ these give the distinct conjugacy classes of maximal elementary abelian $p$-subgroups of $\mathcal{S}_n$.
\end{thm}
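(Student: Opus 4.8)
The plan is to classify the maximal elementary abelian $p$-subgroups of $\mathcal{S}_n$ by analysing how such a subgroup acts on $\{1,\dots,n\}$, and then to read off the conjugacy classes from the resulting combinatorial data.

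First I would fix a maximal elementary abelian $p$-subgroup $E\le\mathcal{S}_n$ and decompose $\{1,\dots,n\}$ into $E$-orbits. Every orbit $\mathcal{O}$ has $p$-power size, say $|\mathcal{O}|=p^{j(\mathcal{O})}$, and since $E$ is abelian its transitive action on $\mathcal{O}$ is regular after quotienting by the kernel of $E\to\mathrm{Sym}(\mathcal{O})$; thus the image $E|_{\mathcal{O}}$ is a regular copy of $(\mathbb{Z}/p)^{j(\mathcal{O})}$, i.e.\ a conjugate of $V_{j(\mathcal{O})}(p)$ inside $\mathcal{S}_{p^{j(\mathcal{O})}}$. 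Hence $E$ is contained in $\prod_{\mathcal{O}} E|_{\mathcal{O}}$, which is again elementary abelian, so maximality of $E$ forces $E=\prod_{\mathcal{O}}E|_{\mathcal{O}}$. After choosing coordinates on each orbit, $E$ is $\mathcal{S}_n$-conjugate to the \emph{standard} subgroup $D(\underline m)$ attached to the multiset $\underline m=(m_j)_{j\ge 0}$ of orbit sizes, where $m_j=\#\{\mathcal{O}:|\mathcal{O}|=p^j\}$ and $\sum_j m_j p^j=n$; here $m_0$ is the number of fixed points of $E$.

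Next I would decide which $D=D(\underline m)$ are actually maximal by computing $C_{\mathcal{S}_n}(D)$. Two facts are needed: (i) a regular copy of an abelian group $A$ inside $\mathrm{Sym}(A)$ is self-centralizing (its centralizer is the group of right translations, which equals $A$ when $A$ is abelian), so $V_j(p)$ is self-centralizing in $\mathcal{S}_{p^j}$; and (ii) an element centralizing $D=\prod_{\mathcal{O}}V_{j(\mathcal{O})}(p)$ cannot carry one non-fixed orbit to another, since conjugation by it would have to identify the two corresponding action-kernels, which are distinct coordinate subgroups of $D$. Consequently $C_{\mathcal{S}_n}(D)=\mathcal{S}_{m_0}\times\prod_{|\mathcal{O}|>1}V_{j(\mathcal{O})}(p)=\mathcal{S}_{m_0}\times D$. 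Now $D$ is a maximal elementary abelian $p$-subgroup of $\mathcal{S}_n$ if and only if it is one of $C_{\mathcal{S}_n}(D)$, and the $p$-part of $\mathcal{S}_{m_0}\times D$ goes beyond $D$ exactly when $\mathcal{S}_{m_0}$ contains an element of order $p$; so this happens precisely when $m_0<p$: if $m_0\ge p$ a $p$-cycle on $p$ of the fixed points enlarges $D$, while if $m_0<p$ there is nothing to add.

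Finally, conjugation in $\mathcal{S}_n$ preserves the multiset of orbit sizes, and conversely two standard subgroups with the same multiset are conjugate (relabel the orbits, using that any two regular representations of a fixed abelian group are intertwined by some bijection). Hence the conjugacy classes of maximal elementary abelian $p$-subgroups of $\mathcal{S}_n$ correspond bijectively to the multisets $(m_j)_{j\ge 0}$ with $\sum_j m_j p^j=n$ and $m_0<p$, i.e.\ to the tuples $(i_1,\dots,i_r)$ with $i_j\ge 0$ and $a_0:=n-(i_1p+\dots+i_rp^r)$ satisfying $0\le a_0<p$; setting $m_0=a_0$ and $m_j=i_j$ for $j\ge 1$ recovers exactly the subgroups and decompositions in the statement. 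The main obstacle is the centralizer computation in the third step — in particular the argument that a $D$-centralizing permutation must fix every non-fixed orbit setwise, together with the self-centralizing property of regular abelian subgroups; there is also the conceptual point that ``maximal'' here means maximal under inclusion rather than of largest order, so the exotic possibility of a maximal $E$ not of product form must be excluded, which is precisely what the orbit decomposition in the first step achieves.
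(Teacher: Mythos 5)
The paper does not prove this statement at all: it is quoted verbatim from the cited reference as a known classification theorem, so there is no internal argument to compare against. Your proposal is a correct, self-contained proof of that classical result, and it follows the standard route. The three pillars all check out: (1) a transitive abelian permutation group acts regularly modulo the kernel, so restricting a maximal elementary abelian $E$ to its orbits exhibits $E$ inside the elementary abelian product $\prod_{\mathcal{O}}E|_{\mathcal{O}}$, and maximality forces equality, which rules out any ``exotic'' non-product maximal subgroup; (2) the centralizer computation $C_{\mathcal{S}_n}(D)=\mathcal{S}_{m_0}\times D$ is right --- a centralizing element preserves each non-fixed orbit because it must preserve the kernel $K_{\mathcal{O}}=\{g\in D: g|_{\mathcal{O}}=\mathrm{id}\}$, and these kernels (which, to be precise, are the products of all coordinate factors \emph{other} than the one acting on $\mathcal{O}$, rather than single coordinate subgroups as you phrase it) are pairwise distinct for non-fixed orbits; together with the self-centralizing property of regular abelian subgroups this gives the claim; (3) since any elementary abelian overgroup of $D$ is abelian and hence lies in $C_{\mathcal{S}_n}(D)$, maximality reduces to whether $\mathcal{S}_{m_0}$ has $p$-torsion, i.e.\ to $m_0<p$, and the conjugacy classification then follows from invariance of the orbit-size multiset. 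One small observation you might add for completeness: since $\sum_j i_jp^j\equiv 0 \pmod p$, the residue $a_0$ is forced to equal $n \bmod p$, so the parametrizing data really is just the tuple $(i_1,\dots,i_r)$ with $\sum_j i_jp^j=n-(n\bmod p)$, exactly as the theorem asserts.
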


\smallskip

\begin{thm}\label{trivialPlus} \cite[Theorem 4.6]{B_S} If $p$ divides $|G|$ and $M$ is a $\mathbf{k} G$-module, then we have $\gamma_G(\mathbf{k}\oplus M)=1+\gamma_G(M)$.
\end{thm}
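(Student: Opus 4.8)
The plan is to turn the claim into an elementary estimate on the integers $c_n^G(-)$. Write $\gamma:=\gamma_G(M)$ and $c_k:=c_k^G(M)$, with the convention $M^{\otimes 0}:=\mathbf{k}$. Since $\mathbf{k}$ is a unit for $\otimes$, we have $(\mathbf{k}\oplus M)^{\otimes n}\cong\bigoplus_{k=0}^{n}{n \choose k}\,M^{\otimes k}$, and $\mathrm{core}_G(-)$ is additive on finite direct sums: by Krull--Schmidt an indecomposable summand of $A\oplus B$ is a summand of $A$ or of $B$, so $\mathrm{core}_G(A\oplus B)=\mathrm{core}_G(A)\oplus\mathrm{core}_G(B)$. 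Taking dimensions,
\[
 c_n^G(\mathbf{k}\oplus M)=\sum_{k=0}^{n}{n \choose k}\,c_k .
\]
The hypothesis $p\mid|G|$ enters exactly here: it forces $\mathbf{k}$ to be non-projective, so the $k=0$ term contributes $c_0=\dim\mathrm{core}_G(\mathbf{k})=1$.

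For the upper bound I would use $c_k^{1/k}\to\gamma$ to choose, for each $\varepsilon>0$, a constant $C\ge 1$ with $c_k\le C(\gamma+\varepsilon)^k$ for all $k\ge 0$ (absorb the finitely many small values of $k$ into $C$). The binomial theorem then gives $c_n^G(\mathbf{k}\oplus M)\le C(1+\gamma+\varepsilon)^n$, hence $\limsup_n\sqrt[n]{c_n^G(\mathbf{k}\oplus M)}\le 1+\gamma+\varepsilon$; letting $\varepsilon\to 0$ gives $\gamma_G(\mathbf{k}\oplus M)\le 1+\gamma$.

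For the lower bound I would first record that $c$ is submultiplicative: $\mathrm{core}_G(M^{\otimes(m+k)})$ is a direct summand of $\mathrm{core}_G(M^{\otimes m})\otimes\mathrm{core}_G(M^{\otimes k})$, because a tensor product with a projective is projective, so $c_{m+k}\le c_m c_k$. Fekete's subadditivity lemma applied to $\log c_k$ then gives $\log\gamma=\inf_{k\ge 1}(\log c_k)/k$, so $(\log c_k)/k\ge\log\gamma$, i.e.\ $c_k\ge\gamma^k$, for every $k$. Hence $c_n^G(\mathbf{k}\oplus M)=\sum_{k=0}^{n}{n \choose k}c_k\ge\sum_{k=0}^{n}{n \choose k}\gamma^k=(1+\gamma)^n$, so $\liminf_n\sqrt[n]{c_n^G(\mathbf{k}\oplus M)}\ge 1+\gamma$. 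Combining the two estimates yields $\gamma_G(\mathbf{k}\oplus M)=1+\gamma_G(M)$.

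Most of this is routine once the decomposition of $(\mathbf{k}\oplus M)^{\otimes n}$ is in hand, and I expect no serious obstacle; the one point that needs care is the degenerate case $\gamma_G(M)=0$, in which $M^{\otimes k}$ is projective for all large $k$ so that $c_k=0$ for those $k$ and $\log c_k$ must be read over the extended reals in the Fekete step (the conclusion $c_k\ge 0=\gamma^k$ is then vacuous). Equivalently, in that case one checks directly that $\sum_{k}{n \choose k}c_k$ is a polynomial in $n$ bounded below by $c_0=1$, so its $n$th root tends to $1=1+\gamma_G(M)$. The only genuinely substantive ingredient is the submultiplicativity-plus-Fekete observation, which is what lets one pin $\gamma$ down as an infimum rather than merely as a limit.
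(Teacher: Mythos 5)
The paper does not reprove this result; it cites it as Theorem 4.6 of Benson--Symonds. Your argument is correct and is essentially the one used in that reference: the binomial decomposition $(\mathbf{k}\oplus M)^{\otimes n}\cong\bigoplus_{k=0}^{n}\binom{n}{k}M^{\otimes k}$ together with additivity of $\mathrm{core}_G$ reduces the claim to an estimate on $\sum_k\binom{n}{k}c_k$, and the two inequalities come from the two characterizations of $\gamma$ as a limit and as an infimum. The submultiplicativity $c_{m+k}\le c_m c_k$ (via tensoring with projectives stays projective) plus Fekete is precisely the mechanism Benson--Symonds already use to establish that the limit in their Theorem 1.2(i) exists, so the lower bound $c_k\ge\gamma^k$ is available for free, and your handling of the degenerate case $\gamma=0$ (where the sum is eventually a polynomial in $n$ bounded below by $c_0=1$) closes the one place Fekete would otherwise be read over the extended reals. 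The only ingredient you should make explicit is where $p\mid|G|$ is used: you correctly note it forces $c_0=\dim\mathrm{core}_G(\mathbf{k})=1$ rather than $0$, which is what puts the $+1$ into the final answer; without it the $k=0$ term would vanish and the conclusion would fail.
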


\smallskip

\section{Indecomposable summands of $M^{(n-r,r)}$ restricted to an elementary abelian $p$-subgroup}

\begin{notn} Throughout this paper, $M$ is the permutation module $M^{(n-r,r)}$ of the symmetric group $\mathcal{S}_n$ on the subgroup $\mathcal{S}_{\lambda}$, where $\lambda=(\lambda_1,\lambda_2)=(n-r,r)\vdash n$ over a field $\mathbf{k} $ of characteristic $p$. We let $n=kp+a_0$ for $0 \leq a_0 < p$ and $r=qp+b_0$ for $0 \leq b_0 < p$. We have a standard basis of $M$ consisting of ${n \choose r } $ tabloids and we will denote each basis element by the second row of the tabloid.
\end{notn}
In this section, we analyse the indecomposable summands of the permutation module $M$ when restricted to a maximal elementary abelian $p$-subgroup of $\mathcal{S}_n$ of rank $k$. We first consider the rank $k$ elementary abelian $p$-subgroup of $\mathcal{S}_n$ generated by $p$-cycles.
\begin{notn}
Let $P=<(1,2,...,p),(p+1,...,2p),...,((k-1)p+1,...,kp)>$. Let $M_P=M \downarrow _P$. Given a tabloid $t$, let $<t>$ denote the cyclic $\mathbf{k} P$-module generated by $t$. Let $B_i=\{(i-1)p+1,...,ip\}$, for $1\leq i \leq k$ and $B_{k+j}=\{kp+j\}$, for $1 \leq j \leq a_0$. Then, $\{B_1,B_2,...,B_{k+a_0}\}$ is a partition of the set $\{1,2,...,n\}$. We will call each $B_i$ a block. Note that $$|B_i| = 
\left\{\begin{array}{lr}
        p, & \text{when } 1 \leq i \leq k\\
        1, & \text{when } i>k
        \end{array} \right. $$ 
        
\end{notn}        
\smallskip
\textbf{(I) Multiplicities of indecomposable summands of} $M_P$\\
\\
Fix the standard basis of $M_P$ to be the ${n \choose r} $ tabloids of $M$. We prove each indecomposable summand of $M_P$ is cyclic and generated by an individual tabloid. Hence, the dimensions of the indecomposable summands are the same as the sizes of $P$-orbits in the set of tabloids. For this purpose, we will pick up each tabloid and consider the cyclic submodule that it generates.\\
\\
\textbf{Case 1:} $r=p$\\
\begin{enumerate}[(i)]
\item Observe that the tabloid $\overline{1,2,...,p}$ generates a 1-dimensional submodule of $M_P$. Similarly, there are $k$ copies of 1-dimensional modules denoted by $A_i ^{(0)}$, $1 \leq i \leq k$, where $A_i^{(0)}=<\overline{(i-1)p+1,...,ip}>$, each non-isomorphic as a $\mathbf{k}P$-module to the others (proved in Corollary \ref{iso}).\\

\item If $t$ is a tabloid whose last row has entries from the block $B_i$ and $B_j$ for $i\neq j $ and $i,j \leq k$, then dim ($<t>$) $\geq p^2$. Therefore, dim ($<t>$)$=p$ if and only if the entries of $t$ are from the block $B_i$ for a unique $i \leq k$ and at least one (or more) entry from any set $B_j$, $j > k$.\\
Now let us see how many such tabloids exist. We know that the last row of the tabloid $t$ has $p$ entries. Let $\mu =(\mu _1, \mu _2)$ be a composition of $p$ into 2 parts. If we choose $\mu _1$ entries from a fixed $B_i$, $ 1 \leq i \leq k$ and $\mu _2$ entries from $B_{k+1} \cup ... \cup B_{k+a_0}$, we obtain a tabloid which generates a $p$-dimensional submodule.\\
Fix $i \leq k$, then the total number of tabloids of the above kind possible are $$\sum_{\substack {\mu \models p \\ \text{into 2 parts}}} {p \choose \mu _1} {a_0 \choose \mu _2}.$$
Since each of these tabloids span a $p$-dimensional submodule of $M_P$, therefore the total number of $p$-dimensional submodules of $M_P$ that are generated by a single tabloid is $$\frac{1}{p}\sum_{\substack {\mu \models p \\ \text{into 2 parts}}} {p \choose \mu _1} {a_0 \choose \mu _2}.$$ 
We will show that these modules are isomorphic to each other in Corollary \ref{iso}.\\
The total number of non-isomorphic $p$-dimensional submodules with basis given by individual tabloids will depend on the choice of $i$, so that is equal to ${k \choose 1}=k$. Hence, the total number of $p$-dimensional submodules which may be generated by a single tabloid is equal to $$\frac{k}{p}\sum_{\substack {\mu \models p \\ \text{into 2 parts}}} {p \choose \mu _1} {a_0 \choose \mu _2}.$$ 
\\
\item A tabloid $t$ such that dim ($<t>$)$=p^d$, for any $d\geq 2$ can be obtained in two different ways. First, choose exactly $d$ distinct sets from the collection $\{B_1,B_2,...,B_k\}$. \\
\begin{enumerate}[(a)]
\item Consider $\mu$ a composition of $p$ into exactly $d$ parts and choose $\mu _i$ entries from a chosen $B_i$. Therefore, for a fixed $d$ element subset of the set $\{B_1,B_2,...,B_k\}$, the total number of tabloids of the above kind possible are $$\sum_{\substack {\mu \models p \\ \text{into \textit{d} parts}}}{p \choose \mu _1}{p \choose \mu _2}\cdots {p \choose \mu _d}.$$
Hence, for a fixed choice of $d$ element set we get $$\frac{1}{p^d}\sum_{\substack {\mu \models p \\ \text{into \textit{d} parts}}}{p \choose \mu _1}{p \choose \mu _2}\cdots {p \choose \mu _d} $$ $p^d$-dimensional submodules that are generated by a single tabloid.\\
\item Consider $\mu$ a composition of $p$ into exactly $d+1$ parts and choose $\mu _{d+1}$ entries from $B_{k+1} \cup ... \cup B_{k+a_0}$. Therefore, for a fixed $d$ element subset of the set $\{B_1,B_2,...,B_k\}$, the total number of tabloids of the above kind possible are $$\sum_{\substack {\mu \models p \\ \text{into \textit{d}+1 parts}}}{p \choose \mu _1}{p \choose \mu _2}\cdots {p \choose \mu _d}{a_0 \choose \mu _{d+1}}$$
and we get $$\frac{1}{p^d}\sum_{\substack {\mu \models p \\ \text{into \textit{d}+1 parts}}}{p \choose \mu _1}{p \choose \mu _2}\cdots {p \choose \mu _d} {a_0 \choose \mu _{d+1}} $$ $p^d$-dimensional submodules that are generated by a single tabloid.\\
\end{enumerate}
For our fixed choice of $d$ element subset of $\{B_1,B_2,...,B_k\}$, the submodules obtained are isomorphic. The number of non-isomorphic submodules depends on the choice of the $d$ element subset of $\{B_1,B_2,...,B_k\}$, which can be chosen in ${k \choose d}$ ways. Hence, the total number of $p^d$-dimensional submodules which may be generated by a single tabloid is equal to $$\frac{1}{p^d}{k \choose d}\Bigg(\sum_{\substack {\mu \models p \\ \text{into \textit{d} parts}}}{p \choose \mu _1}{p \choose \mu _2}\cdots {p \choose \mu _d} + \sum_{\substack {\mu \models p \\ \text{into \textit{d}+1 parts}}}{p \choose \mu _1}{p \choose \mu _2}\cdots {p \choose \mu _d} {a_0 \choose \mu _{d+1}}\Bigg). $$\\
\end{enumerate}

\textbf{Case 2:} $r<p$\\
\begin{enumerate}[(i)]
\item The second row of any tabloid $t$ has $r$ entries, $r<p$. Therefore, if any of the $r$ entries belong to $B_i$ for some $i \leq k$, then dim ($<t>$)$\geq p$. So the only tabloids that generate a 1-dimensional submodule should have all the entries choosen from $B_{k+1} \cup ... \cup B_{k+a_0}$. There are ${a_0 \choose r}$ 1-dimensional submodules.\\
\item A tabloid $t$ such that dim ($<t>$)$=p^d$, for any $d\geq 1$ can be obtained in two different ways. First, choose exactly $d$ distinct sets from the collection $\{B_1,B_2,...,B_k\}$. \\
\begin{enumerate}[(a)]
\item Consider $\mu$ a composition of $r$ into exactly $d$ parts and choose $\mu _i$ entries from a chosen $B_i$. Therefore, for a fixed $d$ element subset of the set $\{B_1,B_2,...,B_k\}$, the total number of tabloids of the above kind possible are $$\sum_{\substack {\mu \models r \\ \text{into \textit{d} parts}}}{p \choose \mu _1}{p \choose \mu _2}\cdots {p \choose \mu _d}.$$
Hence, for a fixed choice of $d$ element set we get $$\frac{1}{p^d}\sum_{\substack {\mu \models r \\ \text{into \textit{d} parts}}}{p \choose \mu _1}{p \choose \mu _2}\cdots {p \choose \mu _d} $$ $p^d$-dimensional submodules that are generated by a single tabloid.\\
\item Consider $\mu$ a composition of $r$ into exactly $d+1$ parts and choose $\mu _{d+1}$ entries from $B_{k+1} \cup ... \cup B_{k+a_0}$. Therefore, for a fixed $d$ element subset of the set $\{B_1,B_2,...,B_k\}$, the total number of tabloids of the above kind possible are $$\sum_{\substack {\mu \models r \\ \text{into \textit{d}+1 parts}}}{p \choose \mu _1}{p \choose \mu _2}\cdots {p \choose \mu _d}{a_0 \choose \mu _{d+1}}$$
and we get $$\frac{1}{p^d}\sum_{\substack {\mu \models r \\ \text{into \textit{d}+1 parts}}}{p \choose \mu _1}{p \choose \mu _2}\cdots {p \choose \mu _d} {a_0 \choose \mu _{d+1}} $$ $p^d$-dimensional submodules that are generated by a single tabloid.\\
\end{enumerate}
For our fixed choice of $d$ element subset of $\{B_1,B_2,...,B_k\}$, the submodules obtained are isomorphic. The number of non-isomorphic submodules depends on the choice of the $d$ element subset of $\{B_1,B_2,...,B_k\}$, which can be chosen in ${k \choose d}$ ways. Hence, the total number of $p^d$-dimensional submodules which may be generated by a single tabloid is equal to $$\frac{1}{p^d}{k \choose d}\Bigg(\sum_{\substack {\mu \models r \\ \text{into \textit{d} parts}}}{p \choose \mu _1}{p \choose \mu _2}\cdots {p \choose \mu _d} + \sum_{\substack {\mu \models r \\ \text{into \textit{d}+1 parts}}}{p \choose \mu _1}{p \choose \mu _2}\cdots {p \choose \mu _d} {a_0 \choose \mu _{d+1}}\Bigg). $$\\
\end{enumerate}

\textbf{Case 3:} $r>p$ (let $r=qp+b_0$)\\
\begin{enumerate}[(i)]
\item The second row of any tabloid $t$ has $r$ entries, $r>p$. Therefore, if any of the $r$ entries belong to $B_i$ for some $i \leq k$, then dim ($<t>$)$\geq p$ unless every element of $B_i$ is an entry of the second row of the tabloid. So the only tabloids that generate a 1-dimensional submodule should have all the entries of some $B_i$, $i \leq k$ and remaining entries choosen from $B_{k+1} \cup ... \cup B_{k+a_0}$. There are $${k \choose q}{a_0 \choose b_0}$$ tabloids that generate a 1-dimensional submodule.\\
\item A tabloid $t$ such that dim ($<t>$)$=p^d$, for any $d\geq 1$ can be obtained in two different ways. First, choose exactly $d$ distinct sets from the collection $\{B_1,B_2,...,B_k\}$. \\
\begin{enumerate}[(a)]
\item For $0 \leq j \leq d$, consider $\mu$ a composition of $jp+b_0$ into exactly $d$ parts, such that each part of $\mu$ is less than $p$ and choose $\mu _i$ entries from a chosen $B_i$. Therefore, for a fixed $d$ element subset of the set $\{B_1,B_2,...,B_k\}$, the total number of tabloids of the above kind possible are $$\sum_{j=0}^d{k-d \choose q-j} \sum_{\substack {\mu \models jp+b_0 \\ \text{into \textit{d} parts}\\ \text{each} < \text{p}}} {p \choose \mu _1}{p \choose \mu _2}\cdots {p \choose \mu _d}.$$
Hence, for a fixed choice of $d$ element set we get $$\frac{1}{p^d}
\sum_{j=0}^d {k-d \choose q-j} \sum_{\substack {\mu \models jp+b_0 \\ \text{into \textit{d} parts}\\ \text{each} < \text{p}}}
{p \choose \mu _1}{p \choose \mu _2}\cdots {p \choose \mu _d} $$ $p^d$-dimensional submodules that are generated by a single tabloid.\\
\item For $0 \leq j \leq d$, consider $\mu$ a composition of $jp+b_0$ into exactly $d+1$ parts, such that each part of $\mu$ is less than $p$ and choose $\mu _i$ entries from a chosen $B_i$ and $\mu_{d+1}$ entries from $B_{k+1} \cup ... \cup B_{k+a_0}$. Therefore, for a fixed $d$ element subset of the set $\{B_1,B_2,...,B_k\}$, the total number of tabloids of the above kind possible are $$\sum_{j=0}^d{k-d \choose q-j} \sum_{\substack {\mu \models jp+b_0 \\ \text{into \textit{d}+1 parts}\\ \text{each} < \text{p}}} {p \choose \mu _1}{p \choose \mu _2}\cdots {p \choose \mu _d}{a_0 \choose \mu _{d+1}}.$$
Hence, we get $$\frac{1}{p^d}
\sum_{j=0}^d {k-d \choose q-j} \sum_{\substack {\mu \models jp+b_0 \\ \text{into \textit{d}+1 parts}\\ \text{each} < \text{p}}}
{p \choose \mu _1}{p \choose \mu _2}\cdots {p \choose \mu _d}{a_0 \choose \mu _{d+1}} $$ $p^d$-dimensional submodules that are generated by a single tabloid.\\
\end{enumerate}
Again, the number of non-isomorphic submodules depends on the choice of the $d$ element subset of $\{B_1,B_2,...,B_k\}$, which can be chosen in ${k \choose d}$ ways. Hence, the total number of $p^d$-dimensional submodules which may be generated by a single tabloid is equal to $$\frac{1}{p^d}{k \choose d}\Bigg(\sum_{j=0}^d {k-d \choose q-j} \Bigg( \sum_{\substack {\mu \models jp+b_0 \\ \text{into \textit{d} parts}\\ \text{each} < \text{p}}}
{p \choose \mu _1}{p \choose \mu _2}\cdots {p \choose \mu _d}+$$$$\sum_{\substack {\mu \models jp+b_0 \\ \text{into \textit{d}+1 parts}\\ \text{each} < \text{p}}}
{p \choose \mu _1}{p \choose \mu _2}\cdots {p \choose \mu _d}{a_0 \choose \mu _{d+1}} \Bigg) \Bigg). 
  $$
\\

\end{enumerate}

\begin{rem} \label{summandsofres}
\begin{enumerate}[(i)]
\item The work above gives us a direct sum decomposition of $M_P$, so that $$M_P=\bigoplus_{\alpha \in I} A_\alpha$$
with $\operatorname{dim} A_\alpha = p^{d_\alpha}$ for some non-negative integer $d_\alpha$.\\
\item Each summand is cyclic and has as its basis a single orbit. We will show in Theorem \ref{summandindec} that they are indecomposable.\\
\end{enumerate}
\end{rem}
\textbf{(II) Tensor products of indecomposable summands}\\
\\
Given a tabloid $t$ corresponding to the partition $(n-r,r)$, $2r \leq n$, we can determine the dim $<t>$ by analysing the second row of $t$. Count the number of blocks $B_1,...,B_k$ that are used to fill up the entries of the second row of $t$, call it $N$ and then count the number of blocks that have been entirely used (i.e., if all the $p$ elements of the block are used), call it $D$. Let $d=N-D$, then dim $<t>=p^d$. We can say that the tabloid $t$ is constituted from $d$ of these blocks.
\smallskip

\begin{notn} Let $A_i ^{(d)}$ be a summand of $M_P$ as described in Remark \ref{summandsofres} such that dim $A_i ^{(d)}=p^d$. Let $A_i ^{(d)}=<t_i>$, for some tabloid $t_i$.
\end{notn}
 
\smallskip
\begin{lem} \label{indecomp} If $t$ is constituted from the blocks $B_1,...,B_d$, then the restriction of $<t>$ to $Q$ is isomorphic to $\mathbf{k} Q$ where $Q=<(1,2,...,p),(p+1,...,2p),...,((d-1)p+1,...,dp)>$.
\end{lem}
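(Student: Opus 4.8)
The plan is to identify $<t>\downarrow_Q$ with the permutation module on a single $Q$-orbit and then show that this orbit is a free (regular) $Q$-set. First I would factor $P$ as an internal direct product $P=Q\times R$, where $R=<((dp+1,\dots,(d+1)p)),\dots,(((k-1)p+1,\dots,kp))>$ is generated by the $p$-cycles on the blocks $B_{d+1},\dots,B_k$; these commute with the generators of $Q$ and meet $Q$ trivially. Since $<t>$ is cyclic on $t$, it has basis the $P$-orbit $P\cdot t$, so the whole argument reduces to understanding how $Q$ and $R$ move the tabloid $t$.

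The two computations I would carry out are: (a) $R$ fixes $t$, and (b) $\mathrm{Stab}_Q(t)=1$. For (a), write $S\subseteq\{1,\dots,n\}$ for the set of entries in the second row of $t$. Saying that $t$ is constituted from $B_1,\dots,B_d$ means precisely that $B_1,\dots,B_d$ are the large blocks that $S$ uses but does not use up, and no other large block is partially used; hence for every $i$ with $d<i\le k$ the set $S\cap B_i$ is either empty or all of $B_i$. Each generator of $R$ is a $p$-cycle supported on a single such $B_i$, so it preserves $S$ (and fixes every entry outside $B_{d+1}\cup\dots\cup B_k$), whence it fixes the tabloid $t$. Thus for $g\in Q$, $h\in R$ we have $gh\cdot t=g\cdot t$, so $P\cdot t=Q\cdot t$ and $<t>\downarrow_Q=\mathbf{k}[Q\cdot t]$ is the $\mathbf{k}Q$-permutation module on the $Q$-set $Q\cdot t$. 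For (b), an element of $Q$ has the form $g=c_1^{a_1}\cdots c_d^{a_d}$ with $c_i$ the $p$-cycle on $B_i$ and $0\le a_i<p$; as the $c_i$ act on pairwise disjoint sets, $g$ fixes $t$ iff each $c_i^{a_i}$ fixes $S\cap B_i$. But $S\cap B_i$ is a nonempty proper subset of $B_i$, and since $p$ is prime $c_i^{a_i}$ is again a $p$-cycle on $B_i$ whenever $a_i\ne 0$, so it has no nonempty proper invariant subset; hence $a_i=0$ for all $i$ and $g=1$.

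Combining these, $Q$ acts freely on $Q\cdot t$, so $|Q\cdot t|=|Q|=p^d$ and $Q\cdot t\cong Q$ as a $Q$-set; therefore $<t>\downarrow_Q=\mathbf{k}[Q\cdot t]\cong\mathbf{k}Q$, which incidentally re-proves $\dim<t>=p^d$. I do not expect a serious obstacle here: the only point that needs care is the blockwise bookkeeping in step (a), namely verifying that every generator of $P$ outside $Q$ acts trivially on $t$ whether its block is untouched by $S$ or completely swallowed by $S$; once $S$ is described block by block this is immediate, and the rest is the orbit--stabilizer theorem.
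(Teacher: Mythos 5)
Your proof is correct and follows essentially the same route as the paper, which simply asserts that the map $\phi:\mathbf{k}Q\to\langle t\rangle$, $g\mapsto g\cdot t$, is an isomorphism. Your two computations are exactly the unstated verifications behind that assertion: (a) the generators of $P$ outside $Q$ fix $t$, so $Q\cdot t=P\cdot t$ and $\phi$ is surjective; and (b) $\mathrm{Stab}_Q(t)=1$, so the tabloids $g\cdot t$ ($g\in Q$) are pairwise distinct basis elements and $\phi$ is injective.
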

\begin{proof}
Let $\phi:\mathbf{k} Q \to <t>$ be defined by $g \mapsto g \cdot t$. This map gives an isomorphism between $\mathbf{k}Q$ and the restriction of $<t>$ to $Q$. 
\end{proof}
\smallskip
\begin{cor} \label{iso} Let $A_i ^{(d)}=<t_i>$ and $A_j ^{(d)}=<t_j>$. $A_i ^{(d)} \cong A_j ^{(d)}$ if and only if both $t_i$ and $t_j$ are constituted by the same set of blocks.
\end{cor}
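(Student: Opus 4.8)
The plan is to identify each cyclic module $<t>$ with a transitive permutation $\mathbf{k}P$-module determined by the stabilizer of $t$ in $P$, and then to read the set of constituting blocks off from that stabilizer. Write $\sigma_l=((l-1)p+1,\dots,lp)$ for the $p$-cycle on the block $B_l$, so that $P=\langle\sigma_1\rangle\times\cdots\times\langle\sigma_k\rangle$, and for a tabloid $t$ let $S(t)\subseteq\{1,\dots,k\}$ be the set of indices $l$ for which $B_l$ is a constituting block of $t$; thus $|S(t)|=d$ whenever $\dim<t>=p^d$.

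First I would prove the structural fact that $\mathrm{Stab}_P(t)=\prod_{l\notin S(t)}\langle\sigma_l\rangle$. Since $P$ is abelian and the blocks are pairwise disjoint, $\sigma_1^{a_1}\cdots\sigma_k^{a_k}$ fixes $t$ if and only if every $\sigma_l^{a_l}$ fixes $t$; and $\sigma_l$ fixes $t$ precisely when $B_l$ lies entirely inside or entirely outside the second row of $t$, that is, when $l\notin S(t)$. (When $l\in S(t)$ the second row of $t$ meets $B_l$ in a proper, non-empty subset, which a $p$-cycle cannot stabilise, so $\langle\sigma_l\rangle$ already acts freely on $t$; this is the one-block instance of Lemma \ref{indecomp}.) Since $<t>$ is by definition the $\mathbf{k}$-span of the $P$-orbit of $t$, it follows that $<t>\cong\mathbf{k}[P/\mathrm{Stab}_P(t)]\cong\mathrm{Ind}_{\prod_{l\notin S(t)}\langle\sigma_l\rangle}^{P}\mathbf{k}$ as $\mathbf{k}P$-modules, an object depending only on $S(t)$. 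Hence if $t_i$ and $t_j$ are constituted by the same set of blocks, then $S(t_i)=S(t_j)$, so $\mathrm{Stab}_P(t_i)=\mathrm{Stab}_P(t_j)$, and therefore $A_i^{(d)}=<t_i>\cong<t_j>=A_j^{(d)}$; this gives the ``if'' direction.

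For the converse I would recover $S(t)$ from the isomorphism type of $<t>$ by restricting to the cyclic subgroups $\langle\sigma_l\rangle$. From the description above, $<t>\downarrow_{\langle\sigma_l\rangle}$ is a direct sum of trivial modules when $l\notin S(t)$ and is free --- a direct sum of copies of the regular module $\mathbf{k}[\mathbb{Z}/p]$ --- when $l\in S(t)$; equivalently, $\dim<t>^{\langle\sigma_l\rangle}$ equals $\dim<t>$ in the first case and $\tfrac{1}{p}\dim<t>$ in the second. Both $\dim<t>$ and $\dim<t>^{\langle\sigma_l\rangle}$ are invariants of the $\mathbf{k}P$-isomorphism class, so $A_i^{(d)}\cong A_j^{(d)}$ forces $l\in S(t_i)\iff l\in S(t_j)$ for every $l$, i.e. $t_i$ and $t_j$ are constituted by the same set of blocks.

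The only step needing genuine care is the ``block-diagonal'' identification of $\mathrm{Stab}_P(t)$ with $\prod_{l\notin S(t)}\langle\sigma_l\rangle$: one must verify that the stabilizer inside the abelian $p$-group $P$ really does split as a product over the individual blocks, and along the way handle a block all of whose $p$ elements appear in the second row of $t$ --- such a block is used but not constituting, and its $p$-cycle fixes $t$. Once that is settled the remaining assertions are formal.
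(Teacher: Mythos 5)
Your proof is correct. The forward direction is essentially the paper's argument seen from a different angle: the paper restricts $\langle t\rangle$ to the subgroup $Q$ generated by the constituting $p$-cycles (Lemma \ref{indecomp}) and notes that the remaining generators act trivially, whereas you identify $\langle t\rangle\cong\mathbf{k}[P/\operatorname{Stab}_P(t)]$ and compute $\operatorname{Stab}_P(t)=\prod_{l\notin S(t)}\langle\sigma_l\rangle$. Since $P=Q\times\operatorname{Stab}_P(t)$, these are the same observation; your phrasing via the orbit–stabilizer correspondence is a bit more explicit about why the $\mathbf{k}P$-isomorphism type depends only on $S(t)$. Where you genuinely go beyond the paper is the converse: the paper's proof as written only argues that equal block sets give isomorphic modules and leaves the ``only if'' implicit, while you produce an honest $\mathbf{k}P$-isomorphism invariant, namely $\dim\langle t\rangle^{\langle\sigma_l\rangle}$, which detects whether $l\in S(t)$ (the restriction to $\langle\sigma_l\rangle$ is free when $l\in S(t)$ and trivial when $l\notin S(t)$, by Mackey or by inspection), and so recovers $S(t)$ from the isomorphism class. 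That extra step closes a small gap in the paper's exposition. Your care with the boundary case of a block entirely contained in the second row (used but not constituting, so its $p$-cycle fixes $t$) is also exactly the point that needs checking when $r>p$.
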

\begin{proof}
Consider $Q$ a subgroup of $P$ generated by $d$ of the $p$-cycles that correspond to the $d$ blocks that constitute $t_i$ and $t_j$. Then, both $<t_i>$ and $<t_j>$ are isomorphic when restricted to $Q$ and the generators of $P$ that are not in $Q$ act trivially on $t_i$ and $t_j$.
\end{proof}
\smallskip
\begin{thm} \label{summandindec}  The cyclic summands of $M_P$ found above are indecomposable.
\end{thm}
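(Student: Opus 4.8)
The plan is to deduce indecomposability of each cyclic summand from the indecomposability of its restriction to a suitably chosen subgroup, rather than attempting to analyze the $\mathbf{k}P$-module structure of $A_i^{(d)}$ directly. Fix a summand $A_i^{(d)}=\langle t\rangle$ of dimension $p^d$, where by the discussion preceding the statement $t$ is constituted from exactly $d$ of the blocks $B_1,\dots,B_k$; after relabeling we may assume these are $B_1,\dots,B_d$. Let $Q=\langle(1,2,\dots,p),\dots,((d-1)p+1,\dots,dp)\rangle\leq P$ be the rank-$d$ elementary abelian $p$-subgroup generated by the $d$ $p$-cycles corresponding to those blocks. By Lemma~\ref{indecomp}, applied with this particular $Q$, we have an isomorphism of $\mathbf{k}Q$-modules $\langle t\rangle\downarrow_Q\cong\mathbf{k}Q$.

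The key observation is that $\mathbf{k}Q$, the regular representation of the $p$-group $Q$ over a field $\mathbf{k}$ of characteristic $p$, is indecomposable as a $\mathbf{k}Q$-module: the group algebra $\mathbf{k}Q$ is a local ring (its Jacobson radical is the augmentation ideal, with quotient $\mathbf{k}$), so $\operatorname{End}_{\mathbf{k}Q}(\mathbf{k}Q)\cong\mathbf{k}Q$ has no idempotents other than $0$ and $1$; equivalently $\mathbf{k}Q$ is the projective cover of the trivial module and hence indecomposable.

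Indecomposability then passes up from the restriction: if $\langle t\rangle=U\oplus V$ were a nontrivial decomposition of $\mathbf{k}P$-modules, restricting to $Q$ would give $\langle t\rangle\downarrow_Q=U\downarrow_Q\oplus V\downarrow_Q$, and since $\langle t\rangle\downarrow_Q\cong\mathbf{k}Q$ is indecomposable one of $U\downarrow_Q$, $V\downarrow_Q$ must be zero, forcing $U=0$ or $V=0$. Hence each $A_i^{(d)}$ is indecomposable, which together with Remark~\ref{summandsofres} shows that the decomposition of $M_P$ obtained above is into indecomposables. I do not expect a genuine obstacle here; the only point requiring care is to invoke Lemma~\ref{indecomp} with the subgroup $Q$ matched precisely to the $d$ blocks that constitute $t$, so that the regular-module identification is available.
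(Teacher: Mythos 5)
Your proof is correct and follows essentially the same route as the paper: restrict to the subgroup $Q$ generated by the $p$-cycles matching the blocks constituting $t$, invoke Lemma~\ref{indecomp} to identify $\langle t\rangle\downarrow_Q$ with $\mathbf{k}Q$, and deduce indecomposability. You simply make explicit two facts the paper leaves implicit, namely that $\mathbf{k}Q$ is indecomposable because $\mathbf{k}Q$ is a local ring in characteristic $p$, and that indecomposability of a restriction forces indecomposability of the original module.
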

\begin{proof}
Let $A_\alpha$ be a summand of $M_P$. We know that $A_\alpha$ is cyclic so let $A_\alpha=<t>$ for some tabloid $t$. Let $Q$ be the subgroup of $P$ generated by the $p$-cycles that constitute $t$, then by Lemma \ref{indecomp} we know that the restriction of $A_\alpha$ to $Q$ is isomorphic to $\mathbf{k} Q$. So $A_\alpha$ is an indecomposable $\mathbf{k} P$-module. 
\end{proof}
\smallskip
\begin{rem} \label{projsummand} If $d=k$ then $A_i ^{(d)}$ is projective. Therefore if $A_i ^{(d)}$ is a non-projective summand then $d<k$.
\end{rem}
\smallskip
\begin{lem} \label{dimen} $A_i ^{(d)} \otimes A_i ^{(d)} = p^d A_i ^{(d)}$.
\end{lem}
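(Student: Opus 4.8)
The plan is to identify $A_i^{(d)}$ as an inflated regular representation and then reduce to the standard fact that the group algebra of a $p$-group becomes free after tensoring with anything. Write $A_i^{(d)}=\langle t_i\rangle$, let $Q\leq P$ be the subgroup generated by the $d$ $p$-cycles corresponding to the blocks that constitute $t_i$, and let $R\leq P$ be generated by the remaining $k-d$ of the standard $p$-cycles, so that $P=Q\times R$. By Lemma \ref{indecomp} we have $A_i^{(d)}\downarrow_Q\cong\mathbf{k}Q$, and, as observed in the proof of Corollary \ref{iso}, the generators of $R$ fix $t_i$ and hence fix every element of the $P$-orbit of $t_i$; thus $R$ acts trivially on $A_i^{(d)}$. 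In other words, $A_i^{(d)}$ is the inflation to $P$, along the projection $P=Q\times R\twoheadrightarrow Q$, of the regular $\mathbf{k}Q$-module.

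Next I would invoke the elementary fact that for any finite group $Q$ and any finite-dimensional $\mathbf{k}Q$-module $V$, the module $\mathbf{k}Q\otimes_{\mathbf{k}}V$ with the diagonal $Q$-action is free of rank $\dim_{\mathbf{k}}V$: the map $g\otimes v\mapsto g\otimes g^{-1}v$ is a $\mathbf{k}Q$-isomorphism from $\mathbf{k}Q\otimes_{\mathbf{k}}V$ onto $\mathbf{k}Q\otimes_{\mathbf{k}}V_0$, where $V_0$ is the underlying space of $V$ with trivial $Q$-action, and the latter is visibly $(\dim_{\mathbf{k}}V)\cdot\mathbf{k}Q$. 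Taking $V=\mathbf{k}Q$ and recalling $\dim_{\mathbf{k}}\mathbf{k}Q=|Q|=p^d$ gives $\mathbf{k}Q\otimes_{\mathbf{k}}\mathbf{k}Q\cong p^d\,\mathbf{k}Q$ as $\mathbf{k}Q$-modules.

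Finally, since inflation along a surjection of groups commutes with $\otimes_{\mathbf{k}}$ and with direct sums, applying it to this isomorphism yields $A_i^{(d)}\otimes_{\mathbf{k}}A_i^{(d)}\cong p^d\,A_i^{(d)}$ as $\mathbf{k}P$-modules, which is the assertion. Concretely, the $\mathbf{k}Q$-isomorphism $g\otimes v\mapsto g\otimes g^{-1}v$ of the previous paragraph is also $\mathbf{k}P$-linear: $R$ acts as the identity both on $A_i^{(d)}\otimes A_i^{(d)}$ and on $p^d\,A_i^{(d)}$ (because $R$ acts trivially on $A_i^{(d)}$), and $P$ is generated by $Q$ together with $R$. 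I do not expect a genuine obstacle here; the only point requiring any care is precisely this last upgrade from $\mathbf{k}Q$-linearity to $\mathbf{k}P$-linearity, and it is immediate once one has recorded that the complementary generators of $P$ act trivially on $A_i^{(d)}$.
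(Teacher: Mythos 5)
Your argument is correct and takes essentially the same route as the paper: the paper's one-line proof cites Lemma~\ref{indecomp} and Corollary~\ref{iso}, which together supply exactly the data you use, namely the splitting $P=Q\times R$ with $A_i^{(d)}\downarrow_Q\cong\mathbf{k}Q$ and $R$ acting trivially on $A_i^{(d)}$. You have simply made explicit the standard isomorphism $\mathbf{k}Q\otimes_{\mathbf{k}}V\cong(\dim V)\,\mathbf{k}Q$ and the inflation-along-$P\twoheadrightarrow Q$ step that the paper leaves implicit.
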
  
\begin{proof}
Consider $Q$ a subgroup of $P$ of rank $d$ generated by the $p$-cycles that correspond to the $d$ blocks that constitute $t$. Proof follows from Lemma \ref{indecomp} and Corollary \ref{iso}.
\end{proof}
\smallskip
\begin{lem} \label{non-iso} If $A_i ^{(d)} \ncong A_j ^{(d')}$ then the indecomposable summands of $A_i ^{(d)} \otimes A_j ^{(d')}$ are of the form $A_k ^{(e)}$, for some $e > \operatorname{max}\{d,d'\} $.
\end{lem}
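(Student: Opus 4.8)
The plan is to evaluate $A_i^{(d)}\otimes A_j^{(d')}$ by descending to a subgroup on which the tensor product becomes free, and then lifting the answer back to $P$. Write $A_i^{(d)}=\langle t_i\rangle$ and $A_j^{(d')}=\langle t_j\rangle$, let $S_i,S_j\subseteq\{1,\dots,k\}$ be the sets of blocks that constitute $t_i$ and $t_j$ (so $|S_i|=d$, $|S_j|=d'$), and write $\sigma_m$ for the $p$-cycle on the block $B_m$. Put $R=\langle\sigma_m:m\in S_i\cup S_j\rangle$. Then $P=R\times\langle\sigma_m:m\notin S_i\cup S_j\rangle$, and every generator of the second factor fixes both $t_i$ and $t_j$ by the meaning of ``constituted from'', hence fixes $t_i\otimes t_j$; so $A_i^{(d)}\otimes A_j^{(d')}$ is the inflation to $P$ of the $\mathbf{k}R$-module $W:=\bigl(A_i^{(d)}\otimes A_j^{(d')}\bigr)\downarrow_R$. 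The whole problem is thereby reduced to identifying $W$.

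First I would compute $W$. Set $T=S_i\cap S_j$, $A'=S_i\setminus T$, $B'=S_j\setminus T$, giving a factorisation into commuting subgroups $R=R_{A'}\times R_T\times R_{B'}$. Applying Lemma~\ref{indecomp} inside $R$, the module $A_i^{(d)}$ restricted to $R$ is free over $Q_i=R_{A'}\times R_T$ and trivial over $R_{B'}$, i.e. $A_i^{(d)}\downarrow_R\cong \mathbf{k}R_{A'}\boxtimes\mathbf{k}R_T\boxtimes\mathbf{k}_{R_{B'}}$, and symmetrically $A_j^{(d')}\downarrow_R\cong \mathbf{k}_{R_{A'}}\boxtimes\mathbf{k}R_T\boxtimes\mathbf{k}R_{B'}$. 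Tensoring factor by factor and using the standard fact that $\mathbf{k}H\otimes_{\mathbf{k}}V$ is free of rank $\dim V$ for any finite group $H$ and $\mathbf{k}H$-module $V$ (so $\mathbf{k}R_{A'}\otimes\mathbf{k}_{R_{A'}}\cong\mathbf{k}R_{A'}$, $\mathbf{k}R_T\otimes\mathbf{k}R_T\cong\mathbf{k}R_T^{\,p^{|T|}}$, $\mathbf{k}_{R_{B'}}\otimes\mathbf{k}R_{B'}\cong\mathbf{k}R_{B'}$), one obtains $W\cong\mathbf{k}R^{\,p^{|T|}}$, a free $\mathbf{k}R$-module. (This also reproves and generalises Lemma~\ref{dimen}, the case $S_i=S_j$.) Since inflation is additive, every indecomposable summand of $A_i^{(d)}\otimes A_j^{(d')}$ is then isomorphic to the inflation of $\mathbf{k}R$, which is precisely a module of the standard shape $A_k^{(e)}$ with block set $S_i\cup S_j$ and $e=|S_i\cup S_j|$; it is indecomposable by the argument of Theorem~\ref{summandindec}, its restriction to $R$ being $\mathbf{k}R$.

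It remains to see $e>\max\{d,d'\}$. The hypothesis $A_i^{(d)}\ncong A_j^{(d')}$ gives $S_i\ne S_j$ by Corollary~\ref{iso}, hence $e=|S_i\cup S_j|\ge\max\{|S_i|,|S_j|\}=\max\{d,d'\}$. I expect the strictness to be the one genuinely delicate point: when $d=d'$ it is immediate, since $S_i\ne S_j$ with $|S_i|=|S_j|$ forces the two sets to be incomparable and so $|S_i\cup S_j|>d$; in the case $d\ne d'$ one has to rule out $S_i\subseteq S_j$, which needs the combinatorial description of which $A^{(d)}$ actually occur as summands to be brought to bear, and is where I would focus the remaining effort. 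The middle paragraph, by contrast, is an essentially mechanical factor-by-factor computation once the factorisation $R=R_{A'}\times R_T\times R_{B'}$ is in place.
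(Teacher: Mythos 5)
Your middle paragraph — decomposing $R=R_{A'}\times R_T\times R_{B'}$, computing the tensor product factor by factor to get $W\cong(\mathbf{k}R)^{\oplus p^{|T|}}$, and inflating — is correct, and it is a more explicit, more module-theoretic version of what the paper does. The paper's own proof just writes down the cyclic submodule $\langle t_i\otimes t_j\rangle$ and the map $g\cdot t_k\mapsto g\cdot t_i\otimes g\cdot t_j$, implicitly using that for abelian $P$ the tensor product of two transitive permutation modules is a direct sum of copies of the transitive permutation module $\mathbf{k}[P/(\operatorname{Stab}(t_i)\cap\operatorname{Stab}(t_j))]$, so both orbits in $X_i\times X_j$ have the same stabilizer. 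So the two routes are the same underlying idea, with yours spelling out the multiplicity $p^{|S_i\cap S_j|}$ explicitly.

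The point you flag at the end is not merely delicate — the strict inequality $e>\max\{d,d'\}$ in the statement is actually false, and it cannot be repaired by bringing in extra combinatorial information. Whenever $\mathcal{B}_i\subsetneq\mathcal{B}_j$ one gets $e=|\mathcal{B}_i\cup\mathcal{B}_j|=d'=\max\{d,d'\}$; the cleanest instance is $A_i^{(0)}$, the trivial summand (present in all three cases discussed), for which $A_i^{(0)}\otimes A_j^{(d')}\cong A_j^{(d')}$ while $A_i^{(0)}\ncong A_j^{(d')}$ when $d'>0$. A less degenerate instance: in $M^{(7,2)}$ at $p=3$, $t_i=\{1,2\}$ has $\mathcal{B}_i=\{B_1\}$, $t_j=\{1,4\}$ has $\mathcal{B}_j=\{B_1,B_2\}$, and $A_i^{(1)}\otimes A_j^{(2)}$ is $3$ copies of an $A_k^{(2)}$, so $e=2=\max\{1,2\}$. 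The paper simply asserts $|\mathcal{B}_i\cup\mathcal{B}_j|>\max\{d,d'\}$ without argument, and the assertion is not supportable. Fortunately the only downstream use, Corollary~\ref{whenproj}, requires only the weaker and correct conclusion that the summands are constituted from $\mathcal{B}_i\cup\mathcal{B}_j$ (so that a collection of block sets covering $\{B_1,\dots,B_k\}$ yields a projective tensor product), and this both your proof and the paper's establish. So: rather than trying to rule out $S_i\subseteq S_j$, which can happen, you should replace $e>\max\{d,d'\}$ by the statement that every summand is an $A_k^{(e)}$ constituted from $\mathcal{B}_i\cup\mathcal{B}_j$ (equivalently $e\ge\max\{d,d'\}$), and note that this is all that is used.
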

\begin{proof}
Let $A_i ^{(d)}=<t_i>$ and $A_j ^{(d')}=<t_j>$ be constituted from the set $\mathcal{B}_i, \mathcal{B}_j \subset \{B_1,...,B_k\}$ respectively with $|\mathcal{B}_i|=d$ and $|\mathcal{B}_j|=d'$ and $\mathcal{B}_i \neq \mathcal{B}_j$. Let $e=|\mathcal{B}_i \cup \mathcal{B}_j|> \operatorname{max}\{d,d'\} $. Let $t_k$ be a tabloid constituted from the $e$ blocks in the set $\mathcal{B}_i \cup \mathcal{B}_j$. Then, $<t_k> \cong <t_i \otimes t_j>$ with the isomorphism $\psi: <t_k> \to <t_i \otimes t_j>$ given by $g \cdot t_k \mapsto g \cdot t_i \otimes g \cdot t_j$.
\end{proof}
\smallskip
\begin{cor} \label{whenproj} Let $\{t_{\alpha}\}_{\alpha \in I}$ be a collection of tabloids constituted from the blocks $\{\mathcal{B}_{\alpha}\}_{\alpha \in I}$, where $\mathcal{B}_{\alpha} \subset \{B_1,...,B_k\}$ for every $\alpha \in I$ such that $$\bigcup_{\alpha \in I} \mathcal{B}_{\alpha}=\{B_1,...,B_k\}.$$ Then, $$<\bigotimes_{\alpha \in I} t_{\alpha}>$$ is a projective $kP$-module.
\end{cor}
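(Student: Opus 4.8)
The plan is to prove the stronger statement that $<\bigotimes_{\alpha\in I}t_{\alpha}>$ is in fact \emph{free} of rank one over $\mathbf{k}P$, which in particular makes it projective. Put $v=\bigotimes_{\alpha\in I}t_{\alpha}$ and define $\psi\colon\mathbf{k}P\to\,<v>$ on the basis of group elements by $\psi(g)=g\cdot v=\bigotimes_{\alpha\in I}(g\cdot t_{\alpha})$, extended $\mathbf{k}$-linearly. By construction $\psi$ is a surjective homomorphism of $\mathbf{k}P$-modules, so it suffices to see that it is injective, i.e. that the translates $g\cdot v$, $g\in P$, are linearly independent; since each $g\cdot t_{\alpha}$ is again a standard basis tabloid, the elements $g\cdot v$ are standard basis tensors of $M^{\otimes|I|}$, so their linear independence is equivalent to the orbit $P\cdot v$ having full size $|P|=p^{k}$, i.e. to $\operatorname{Stab}_{P}(v)$ being trivial.

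First I would reduce the stabilizer of the tensor to the stabilizers of the factors: because $g\cdot v$ and $v$ are standard basis tensors, $g\cdot v=v$ holds if and only if $g\cdot t_{\alpha}=t_{\alpha}$ for every $\alpha\in I$, so $\operatorname{Stab}_{P}(v)=\bigcap_{\alpha\in I}\operatorname{Stab}_{P}(t_{\alpha})$. Then comes the one genuine computation. Write $P=\langle\sigma_{1}\rangle\times\cdots\times\langle\sigma_{k}\rangle$ with $\sigma_{i}=((i-1)p+1,\dots,ip)$, and let $S_{\alpha}\subseteq\{1,\dots,n\}$ be the entry set of the second row of $t_{\alpha}$. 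For $g=\prod_{i=1}^{k}\sigma_{i}^{a_{i}}$ one has $g\cdot S_{\alpha}=S_{\alpha}$ iff $\sigma_{i}^{a_{i}}$ fixes $S_{\alpha}\cap B_{i}$ setwise for every $i$; since $\langle\sigma_{i}\rangle$ has prime order $p$ and acts on $B_{i}$ as a single $p$-cycle, for $a_{i}\not\equiv0\pmod p$ this fails exactly when $\emptyset\neq S_{\alpha}\cap B_{i}\subsetneq B_{i}$, that is, exactly when $B_{i}\in\mathcal{B}_{\alpha}$. Hence $\operatorname{Stab}_{P}(t_{\alpha})=\prod_{B_{i}\notin\mathcal{B}_{\alpha}}\langle\sigma_{i}\rangle$, and intersecting over $\alpha$ gives $\operatorname{Stab}_{P}(v)=\prod_{B_{i}\notin\bigcup_{\alpha\in I}\mathcal{B}_{\alpha}}\langle\sigma_{i}\rangle$, which is trivial precisely because of the hypothesis $\bigcup_{\alpha\in I}\mathcal{B}_{\alpha}=\{B_{1},\dots,B_{k}\}$. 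Therefore $\psi$ is an isomorphism and $<v>\cong\mathbf{k}P$ is projective.

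There is also a shorter inductive route that I would record: the argument behind Lemma \ref{non-iso}, together with Lemma \ref{dimen}, shows that $<t_{i}\otimes t_{j}>$ is again a cyclic $\mathbf{k}P$-module constituted from $\mathcal{B}_{i}\cup\mathcal{B}_{j}$ (the diagonal map $g\mapsto g\cdot t_{i}\otimes g\cdot t_{j}$ is an isomorphism onto it, by the same stabilizer bookkeeping, whether or not $\mathcal{B}_{i}=\mathcal{B}_{j}$), so iterating over $\alpha\in I$ exhibits $<\bigotimes_{\alpha}t_{\alpha}>$ as a cyclic module constituted from $\bigcup_{\alpha}\mathcal{B}_{\alpha}=\{B_{1},\dots,B_{k}\}$, and Remark \ref{projsummand} then yields projectivity. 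In either approach the only point needing care is the stabilizer bookkeeping — namely that $\sigma_{i}$ fixes $t_{\alpha}$ exactly when the block $B_{i}$ is used either not at all or in its entirety in the second row of $t_{\alpha}$ — and since $p$ is prime there are no intermediate subgroups of $\langle\sigma_{i}\rangle$ to track, so I do not anticipate a real obstacle here.
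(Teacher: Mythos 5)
Your proposal is correct and matches the paper's proof, which simply invokes Lemma~\ref{non-iso} and Remark~\ref{projsummand}---this is exactly your second, inductive route. Your first, direct stabilizer computation (showing $\operatorname{Stab}_P(v)=\prod_{B_i\notin\bigcup_\alpha\mathcal{B}_\alpha}\langle\sigma_i\rangle=\{1\}$, hence $<v>\cong\mathbf{k}P$) is just an unwound, self-contained version of the same diagonal-map bookkeeping that the cited lemma and remark encode, not a genuinely different argument.
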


\begin{proof}
Proof follows from Lemma \ref{non-iso} and Remark \ref{projsummand}.
\end{proof}
\smallskip
\section{The Benson - Symonds invariant for $M_P$}

In this section we calculate the Benson - Symonds invariant for $M_P$. We know from \cite{KErdmannYoung} that the indecomposable summands of permutation modules are Young modules and as a consequence of Mackey's theorem, tensor product of Young modules decomposes into a direct sum of Young modules. Hence, only finitely many summands occur in tensor powers of $M$ and consequently for $M_P$. We know therefore from Remark \ref{count} that it is enough to count the number of non-projective indecomposable summands of the $core$ of higher tensor powers of $M_P$. Let
$$core_P(M)=\bigoplus_{\alpha \in I} A_\alpha$$
where each $A_\alpha$ is a non-projective indecomposable summand with $\operatorname{dim} A_\alpha = p^{d_\alpha}$ for some non-negative integer $d_\alpha$. Let $\mathcal{B}_\alpha$ be the subset of $\{B_1,...,B_k\}$ containing the blocks that constitute the tabloids in $A_\alpha$ so that $|\mathcal{B}_\alpha|=d_\alpha$. We have
$$core_P(M^{\otimes n}) = \bigoplus_{\nu \models n} \Bigg( {n \choose \nu} core \Big( \bigotimes_\alpha (A_\alpha)^{\otimes \nu _\alpha} \Big )\Bigg) $$
where ${n \choose \nu}$ is the multinomial coefficient and if $\nu_\alpha \geq 1$ then by Lemma \ref{dimen} $$(A_\alpha)^{\otimes \nu _\alpha}= p^{d_\alpha(\nu_\alpha-1)}\cdot A_\alpha$$
Therefore for any $J \subset I$, with $w=|J|$ the coefficient of $\bigotimes_{\alpha\in J} A_\alpha$ is 
\begin{eqnarray}
\sum_{\substack {\nu \models n \\ \text{into w parts}}} {n \choose \nu} p^{\sum_{\alpha \in J} d_\alpha(\nu _\alpha -1)} \label{4.1}
\end{eqnarray}

The coefficient above depends only on $J$. We know from Corollary \ref{whenproj} that $\bigotimes_{\alpha\in J} A_\alpha$ is non-projective if and only if $$\bigcup_{\alpha \in J}\mathcal{B}_\alpha \neq \{B_1,...,B_k\}.$$
In fact the coefficient will be maximum if $J$ is chosen such that $$\bigcup_{\alpha \in J}\mathcal{B}_\alpha =\{B_1,...,B_{k-1}\}$$
making (\ref{4.1}) sum over the largest possible set of compositions of $n$  while being a coefficient of a non-projective summand.\\

\noindent Let $J_0$ be the maximal possible subset of $I$ such that $$\bigcup_{\alpha \in J_0}\mathcal{B}_\alpha =\{B_1,...,B_{k-1}\}.$$

\begin{thm} \label{maxinvt} $$\gamma_P(M)=\sum_{\alpha \in J_0} p^{d_\alpha}$$
\end{thm}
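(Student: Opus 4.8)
The plan is to extract $\gamma_P(M)$ as the $n$-th root limit of a single combinatorial sum and then to evaluate that limit with the multinomial theorem.

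First I would reduce the problem to counting. Since every indecomposable summand occurring in a tensor power of $M_P$ is a Young module and only finitely many of them occur, Remark \ref{count}(ii) allows us to replace $c_n^P(M)$ by $d_n^P(M)$, the number of indecomposable summands of $core_P(M^{\otimes n})$, so that $\gamma_P(M)=\lim_{n\to\infty}\sqrt[n]{d_n^P(M)}$. Next, extending Corollary \ref{iso} and Lemma \ref{non-iso} from two to arbitrarily many tensor factors by the same orbit–stabiliser argument, I would record that for each $J\subseteq I$ the module $\bigotimes_{\alpha\in J}A_\alpha$ is a direct sum of copies of the single indecomposable $A_{\mathcal{B}_J}$, where $\mathcal{B}_J=\bigcup_{\alpha\in J}\mathcal{B}_\alpha$; by Remark \ref{projsummand} and Corollary \ref{whenproj} it is non-projective precisely when $\mathcal{B}_J\subsetneq\{B_1,\dots,B_k\}$, and in that case it splits into $p^{(\sum_{\alpha\in J}d_\alpha)-|\mathcal{B}_J|}$ indecomposable summands. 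Feeding Lemma \ref{dimen} and these facts into the decomposition of $core_P(M^{\otimes n})$ displayed just before the statement, and grouping the compositions $\nu$ of $n$ by their support, this yields
$$d_n^P(M)=\sum_{\substack{J\subseteq I\\ \mathcal{B}_J\subsetneq\{B_1,\dots,B_k\}}} p^{(\sum_{\alpha\in J}d_\alpha)-|\mathcal{B}_J|}\ \sum_{\substack{\nu\models n\\ \operatorname{supp}\nu=J}}\binom{n}{\nu}\,p^{\sum_{\alpha\in J}d_\alpha(\nu_\alpha-1)},$$
where the inner sum is exactly (\ref{4.1}).

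The heart of the argument is the estimate that, for each fixed non-empty $J$, the inner sum (\ref{4.1}) has $n$-th root tending to $\sum_{\alpha\in J}p^{d_\alpha}$. For this I would pull out the factor $p^{-\sum_{\alpha\in J}d_\alpha}$ and note that, by the multinomial theorem with $x_\alpha=p^{d_\alpha}$, summing $\binom{n}{\nu}\prod_{\alpha\in J}p^{d_\alpha\nu_\alpha}$ over all $\nu$ with $\operatorname{supp}\nu\subseteq J$ equals $\bigl(\sum_{\alpha\in J}p^{d_\alpha}\bigr)^n$; stripping off the contributions of the proper subsets $J'\subsetneq J$ by inclusion–exclusion leaves $\bigl(\sum_{\alpha\in J}p^{d_\alpha}\bigr)^n$ minus a signed sum of $n$-th powers of strictly smaller numbers (strictly smaller since every $p^{d_\alpha}\ge1$), so the leading term dominates and the $n$-th root converges to $\sum_{\alpha\in J}p^{d_\alpha}$. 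Because there are only finitely many $J$ and the prefactors $p^{(\sum_{\alpha\in J}d_\alpha)-|\mathcal{B}_J|}$ are bounded above and below, the $n$-th root of the full sum $d_n^P(M)$ converges to the maximum of these individual limits:
$$\gamma_P(M)=\max\Bigl\{\textstyle\sum_{\alpha\in J}p^{d_\alpha}\ :\ J\subseteq I,\ \mathcal{B}_J\subsetneq\{B_1,\dots,B_k\}\Bigr\}.$$
Finally $\sum_{\alpha\in J}p^{d_\alpha}$ is monotone in $J$, so this maximum is attained at the largest $J$ whose block-union omits a single block; by the symmetry of $B_1,\dots,B_k$ in the construction of the $A_\alpha$ the value does not depend on which block is omitted, and omitting $B_k$ identifies the maximising $J$ with $J_0$, giving $\gamma_P(M)=\sum_{\alpha\in J_0}p^{d_\alpha}$.

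The step I expect to require the most care is the first one: verifying that $\bigotimes_{\alpha\in J}A_\alpha$ is a direct sum of copies of one non-projective indecomposable, with no stray projective summands, whenever $\mathcal{B}_J\subsetneq\{B_1,\dots,B_k\}$. This is the natural multi-factor extension of Corollary \ref{iso} and Lemma \ref{non-iso}: for a product basis element $\bigotimes_{\alpha\in J}t_\alpha$ the stabiliser in $P$ is generated precisely by the $p$-cycles on the blocks lying outside $\bigcup_{\alpha\in J}\mathcal{B}_\alpha$, so every $P$-orbit on the product basis has the same size $p^{|\mathcal{B}_J|}$ and the resulting cyclic summands are mutually isomorphic by Corollary \ref{iso}. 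Once this bookkeeping is in hand, the remaining ingredients — the multinomial identity, the inclusion–exclusion, and the monotonicity–symmetry conclusion — are routine.
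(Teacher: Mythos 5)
Your proof is correct and takes essentially the same route as the paper's: reduce to counting via Remark \ref{count}, use Lemma \ref{dimen} and Lemma \ref{non-iso} to control tensor products of the $A_\alpha$, and evaluate the $n$-th root limit by the multinomial theorem, with $J_0$ identified as the dominant subset. Your version spells out more carefully both the inclusion--exclusion handling of the support condition $\operatorname{supp}\nu = J$ and the monotonicity argument showing $\sum_{\alpha\in J}p^{d_\alpha}$ is maximised at $J=J_0$, but these are refinements of the same argument rather than a different method.
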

\begin{proof}

Let $w=|J_0|$. Rearrange the indexing set $I$ so that the first $w$ indices in $I$ are those in $J_0$. 
Let
$$N_P=\bigotimes_{\alpha=1}^w A_\alpha.$$
Then, the multiplicity of $N_P$ is maximum. Hence, the Benson - Symonds invariant for $M_P$ is equal to the limit of the $n$th root of the multiplicity of $N_P$ in higher tensor powers of $M_P$. Observe that the coefficient of $N_P$ in $(M_P)^{\otimes n}$ is the same as the coefficient of $N_P$ in $$\Bigg( \bigoplus_{\alpha=1}^w A_\alpha \Bigg)^{\otimes n} $$
Therefore, the limit of the $n$th root of the multiplicity of $N_P$ in $(M_P)^{\otimes n}$ is the same as the limit of the $n$th root of the multiplicity of $N_P$ in $$\Bigg( \bigoplus_{\alpha =1}^w A_\alpha \Bigg)^{\otimes n} $$ which is the same as the limit of the $n$th root of the number of non-projective indecomposable summands in $$\Bigg( \bigoplus_{\alpha=1}^w A_\alpha \Bigg)^{\otimes n} $$
which is 
\begin{eqnarray*}
&=&\sum_{\substack {\nu \models n }} {n \choose \nu} p^{\sum_{\substack{ \alpha \in J_0\\ \nu_\alpha \neq 0} } d_\alpha(\nu _\alpha -1)}\\
&=&\sum_{\substack {\nu \models n }} {n \choose \nu} p^{\sum_{\alpha \in J_0} d_\alpha \nu _\alpha} \cdot p^{-\sum_{\alpha \in J_0} d_\alpha}\\
&=&\frac{1}{p^{\sum_{\alpha \in J_0} d_\alpha}}    \sum_{\substack {\nu \models n }} {n \choose \nu} p^{\sum_{\alpha \in J_0} d_\alpha \nu _\alpha} \\
&=&\frac{1}{p^{\sum_{\alpha \in J_0} d_\alpha}} \bigg (\sum_{\alpha \in J_0} p^{d_\alpha}\bigg )^n\\
\end{eqnarray*}
and 
\begin{eqnarray*}
\gamma_P(M)= \lim _{n \to \infty} \bigg (  \frac{1}{p^{\sum_{\alpha \in J_0} d_\alpha}}  \bigg)^{1/n} \sum_{\alpha \in J_0} p^{d_\alpha} =\sum_{\alpha \in J_0} p^{d_\alpha} 
\end{eqnarray*}
\end{proof}
Recall that $p^{d_\alpha}$ is the dimension of $A_\alpha$ which is constituted by $\mathcal{B}_\alpha \subset \{B_1,...,B_{k-1}\}$. Hence, the Benson - Symonds invariant is really the sum of the dimensions of the indecomposable summands that are constituted by a set of blocks other than the block $B_k$. In the previous section we saw what the indecomposable summands are for the three different cases and what are their multiplicities. We will now evaluate the invariant by adding the dimensions of indecomposable summands that are constituted by blocks other than $B_k$.\\
\\
\textbf{Case 1:} $r=p$, $\lambda = (\lambda_1,\lambda_2)=(n-p,p)$\\
Recall that a $p^d$-dimensional submodule was obtained by choosing a $d$ element subset of $\{B_1,...,B_k\}$. There were a total of ${k \choose d}$ non-isomorphic copies in each case. Now that we want only the submodules constituted by blocks other than $B_k$, we must choose a $d$ element subset of $\{B_1,...,B_{k-1}\}$ and hence there will be ${k-1 \choose d}$ non-isomorphic copies in each case. Multiplying the total number of $p^d$-dimensional submodules with $p^d$ to get the Benson - Symonds invariant we get,
$$\gamma_P(M)= k+ (k-1)\sum_{\substack {\mu \models p \\ \text{into 2 parts}}} {p \choose \mu _1} {a_0 \choose \mu _2}+\sum_{d=2} ^{k-1} {k-1 \choose d}\Bigg(\sum_{\substack {\mu \models p \\ \text{into \textit{d} parts}}}{p \choose \mu _1}{p \choose \mu _2}\cdots {p \choose \mu _d} +$$
$$\sum_{\substack {\mu \models p \\ \text{into \textit{d}+1 parts}}}{p \choose \mu _1}{p \choose \mu _2}\cdots {p \choose \mu _d} {a_0 \choose \mu _{d+1}}\Bigg) $$
$$=1+{n-p \choose p} = {n-p \choose \lambda_1}+{n-p \choose \lambda_2} $$
\textit{See proof in appendix.} Theorem \ref{caser=p} \\
\\
\textbf{Case 2:} $r<p$, $\lambda = (\lambda_1,\lambda_2)=(n-r,r)$\\
$$ \gamma_P(M)={a_0 \choose r}+ \sum_{d=1} ^{k-1}{k-1 \choose d}\Bigg(\sum_{\substack {\mu \models r \\ \text{into \textit{d} parts}}}{p \choose \mu _1}{p \choose \mu _2}\cdots {p \choose \mu _d} +$$ $$\sum_{\substack {\mu \models r \\ \text{into \textit{d}+1 parts}}}{p \choose \mu _1}{p \choose \mu _2}\cdots {p \choose \mu _d} {a_0 \choose \mu _{d+1}}\Bigg) $$
$$={n-p \choose r} = {n-p \choose \lambda_1}+{n-p \choose \lambda_2} $$
\textit{See proof in appendix.} Theorem \ref{caser<p} \\
\\
\textbf{Case 3:} $r>p$, $r=qp+b_0$, $\lambda = (\lambda_1,\lambda_2)=(n-r,r)$\\
$$\gamma_P(M) = {k \choose q}{a_0 \choose b_0}+\sum_{d=1}^{k-1} {k-1 \choose d}\Bigg(\sum_{j=0}^d {k-d \choose q-j} \Bigg( \sum_{\substack {\mu \models jp+b_0 \\ \text{into \textit{d} parts}\\ \text{each} < \text{p}}}
{p \choose \mu _1}{p \choose \mu _2}\cdots {p \choose \mu _d}+$$$$\sum_{\substack {\mu \models jp+b_0 \\ \text{into \textit{d}+1 parts}\\ \text{each} < \text{p}}}
{p \choose \mu _1}{p \choose \mu _2}\cdots {p \choose \mu _d}{a_0 \choose \mu _{d+1}} \Bigg) \Bigg) 
  $$
  $$={n-p \choose r}+{n-p \choose r-p} = {n-p \choose \lambda_1}+{n-p \choose \lambda_2} $$
\textit{See proof in appendix.} Theorem \ref{caser>p} \\
\\
Hence we conclude that 
\begin{eqnarray}
\gamma_P(M)&=&{n-p \choose \lambda_1}+{n-p \choose \lambda_2}. \label{4.3}
\end{eqnarray}
\smallskip

\section{The Benson - Symonds invariant for $M$}

In this section we first look at the invariant for $M$ restricted to an arbitrary maximal elementary abelian $p$-subgroup of $\mathcal{S}_n$ and then use Theorem \ref{maxElt} to determine the invariant for $M$. Let $E$ be an arbitrary maximal elementary abelian $p$-subgroup of $\mathcal{S}_n$.  It is clear that rank $E \leq k$. Using Theorem \ref{conjEltAb}, consider the partition of $\{1,2,...,n\}$ into orbits of $E$ and denote it by $C_1,...,C_m,C_{m+1},...,C_{m+a_0}$ where $|C_i|=p^{c_i}$ for some $c_i \in \mathbb{Z}_{>0}$ for $1 \leq i \leq m$ and $|C_i|=1$ for $i>m$.\\ 
$$core_E(M)=\bigoplus_{\alpha \in I} A_\alpha$$
where each $A_\alpha$ is a cyclic non-projective indecomposable summand generated by a single tabloid with $\operatorname{dim} A_\alpha = p^{d_\alpha}$ for some non-negative integer $d_\alpha$ (same as the size of $E$-orbit of the tabloid). Let $\mathcal{C}_\alpha$ be the subset of $\{C_1,...,C_m\}$ containing the blocks that constitute the tabloids in $A_\alpha$.\\

Choose $1 \leq i_0 \leq m$ such that $c_{i_0}$=min $\{c_i | 1 \leq i \leq m\}$. Let $J$ be the maximal possible subset of $I$ such that $$\bigcup_{\alpha \in J}\mathcal{C}_\alpha =\{C_1,...,C_m\} \setminus \{C_{i_0}\}$$
Then as in Theorem \ref{maxinvt}, we have 
$$\gamma_E(M)=\sum_{\alpha \in J} p^{d_\alpha}$$
So $\gamma_E(M)$ is equal to the number of tabloids in $M$ that are constituted by the blocks other than the block $C_{i_0}$. This can at most be the number of tabloids in $M$ constituted by the blocks other than $B_k$ in the previous section because $|B_k| \leq |C_{i_0}|$.\\
Therefore, $$\gamma_E(M) \leq \gamma_P(M)$$
Hence, we have proved the following theorem:\\
\begin{thm} Let $M^{(n-r,r)}$ be the permutation module over the symmetric group $\mathcal{S}_n$ and let $P$ be an elementary abelian $p$-subgroup of $\mathcal{S}_n$ of highest possible rank. Then,
$$\gamma_{\mathcal{S}_n}(M)=\gamma_P(M)$$
\end{thm}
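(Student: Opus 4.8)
The plan is to combine the reduction of $\gamma$ to elementary abelian $p$-subgroups with the orbit-by-orbit analysis already carried out for $P$. By Theorem~\ref{maxElt}, $\gamma_{\mathcal{S}_n}(M)=\max_E\gamma_E(M)$ with $E$ ranging over the elementary abelian $p$-subgroups of $\mathcal{S}_n$, and by the monotonicity $\gamma_H(M)\le\gamma_G(M)$ for $H\le G$ it suffices to let $E$ range over the \emph{maximal} ones, which Theorem~\ref{conjEltAb} classifies up to conjugacy. Since $P$ is one of these, the inequality $\gamma_P(M)\le\gamma_{\mathcal{S}_n}(M)$ is immediate, so the whole content is the reverse bound $\gamma_E(M)\le\gamma_P(M)$ for every maximal elementary abelian $p$-subgroup $E$, knowing from (\ref{4.3}) that $\gamma_P(M)=\binom{n-p}{\lambda_1}+\binom{n-p}{\lambda_2}$.

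To estimate $\gamma_E(M)$ I would repeat the structural analysis of Section~3 with $E$ in place of $P$. Using Theorem~\ref{conjEltAb}, write the $E$-orbits on $\{1,\dots,n\}$ as $C_1,\dots,C_m$ of sizes $|C_i|=p^{c_i}$ with $c_i\ge 1$, together with $a_0$ fixed points, where $\sum_{i=1}^m p^{c_i}=n-a_0=kp$ and $m\le\operatorname{rank}E\le k$; on each $C_i$ the group $E$ acts through the regular representation of $(\mathbb{Z}/p)^{c_i}$. For a tabloid $t$ with second row $S$, the stabiliser of $S\cap C_i$ in this factor is a subgroup, and because the regular action is transitive it is the whole factor precisely when $S\cap C_i\in\{\emptyset,C_i\}$. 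With this remark the analogues of Lemma~\ref{indecomp}, Corollary~\ref{iso} and Theorem~\ref{summandindec} go through: every indecomposable summand $A_\alpha$ of $M_E$ is cyclic, generated by a single tabloid, with $\dim A_\alpha=p^{d_\alpha}$ and basis a single $E$-orbit, it is projective iff $d_\alpha=\operatorname{rank}E$, and by the analogues of Lemma~\ref{non-iso} and Corollary~\ref{whenproj} only finitely many such summands occur in tensor powers. The argument of Theorem~\ref{maxinvt} then applies with $E$, $C_i$ in the roles of $P$, $B_i$: choosing a smallest orbit $C_{i_0}$ one gets $\gamma_E(M)=\sum_{\alpha\in J}p^{d_\alpha}$, where $J$ collects the summands whose constituting set $\mathcal{C}_\alpha$ avoids $C_{i_0}$, and since the basis of each $A_\alpha$ is a single orbit this sum is at most the number of tabloids $t$ whose second row $S$ satisfies $S\cap C_{i_0}\in\{\emptyset,C_{i_0}\}$.

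The last step is a one-line binomial monotonicity. The number of such $r$-subsets is
\begin{equation*}
\binom{n-|C_{i_0}|}{r}+\binom{n-|C_{i_0}|}{r-|C_{i_0}|}=\binom{n-|C_{i_0}|}{r}+\binom{n-|C_{i_0}|}{n-r},
\end{equation*}
using $(r-|C_{i_0}|)+(n-r)=n-|C_{i_0}|$. Both binomials on the right are weakly decreasing in $|C_{i_0}|$ because $r$ and $n-r$ are fixed, and $|C_{i_0}|\ge p$ since $C_{i_0}$ is a nonsingleton orbit of a $p$-group; hence $\gamma_E(M)\le\binom{n-p}{r}+\binom{n-p}{n-r}=\binom{n-p}{\lambda_1}+\binom{n-p}{\lambda_2}=\gamma_P(M)$ by (\ref{4.3}). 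Taking the maximum over $E$ and invoking Theorem~\ref{maxElt} together with $\gamma_P(M)\le\gamma_{\mathcal{S}_n}(M)$ yields $\gamma_{\mathcal{S}_n}(M)=\gamma_P(M)$.

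I expect the bulk of the work, and the main obstacle, to be the second paragraph: transporting the Section~3 structure theory from the very concrete group $P$---generated by disjoint $p$-cycles, where each block contributes a single factor of $p$---to an arbitrary maximal elementary abelian $p$-subgroup, whose orbits are regular $(\mathbb{Z}/p)^{c}$-sets and where a single orbit may contribute a variable power of $p$ to the dimension of a summand. One must set up the bookkeeping of which orbits ``constitute'' a tabloid in this generality and check that cyclicity, the projectivity criterion, and the tensor-product decomposition all survive; once that is in place, the comparison $\gamma_E(M)\le\gamma_P(M)$ and hence the theorem follow quickly from the elementary binomial estimate above.
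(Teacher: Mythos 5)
Your proof takes essentially the same route as the paper: reduce via Theorem~\ref{maxElt} (and monotonicity) to maximal elementary abelian $E$, classify the $E$-orbits via Theorem~\ref{conjEltAb}, rerun the Section~3 structure theory to show each summand of $M_E$ is cyclic on a single tabloid orbit with a well-defined set of ``constituting'' blocks, apply the Theorem~\ref{maxinvt} mechanism with $E$, $C_i$ replacing $P$, $B_i$, and observe that $\gamma_E(M)$ counts tabloids whose second row meets a smallest nontrivial orbit $C_{i_0}$ in $\emptyset$ or all of $C_{i_0}$. The one place you go beyond the paper is the final comparison: the paper simply asserts that this count is at most the corresponding count for $P$ because $|B_k|\le|C_{i_0}|$, whereas you make this precise by writing the count as $\binom{n-|C_{i_0}|}{r}+\binom{n-|C_{i_0}|}{n-r}$ and noting both binomials are weakly decreasing in $|C_{i_0}|$; that small piece of bookkeeping is a genuine improvement in rigor. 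Your self-assessment that the main labor lies in transporting Lemma~\ref{indecomp}--Corollary~\ref{whenproj} to orbits of size $p^{c_i}$ with $c_i>1$ is accurate, and is exactly the part the paper also treats briefly; you correctly identify the point that a single orbit can now contribute a variable power of $p$ to $\dim A_\alpha$, which is harmless because the projectivity criterion ($d_\alpha=\operatorname{rank}E$) and the Mackey-type tensor decomposition over the abelian $p$-group $E$ still hold.
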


This completes the proof of Theorem \ref{mytheorem} using (\ref{4.3}) from the previous section and Theorem \ref{maxElt}.
\\

\section{Further Directions}

In this section, we include some observations and further questions. The following conjecture is motivated by the behavior of permutation modules when restricted to elementary abelian $p$-subgroups and also by several computations performed using the computer algebra system \textbf{Magma} \cite{magma}. \\

\begin{conj} If $\lambda \vdash n$ and $M^\lambda$ is the permutation module of the symmetric group $\mathcal{S}_n$, then $\gamma_{\mathcal{S}_n}(M^\lambda)$ is an integer.
\end{conj}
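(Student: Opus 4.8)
The plan is to derive the equality from Theorem~\ref{maxElt}. The subgroup $P$ generated by the $k$ disjoint $p$-cycles is one of the maximal elementary abelian $p$-subgroups of $\mathcal{S}_n$ appearing in Theorem~\ref{conjEltAb}, so that theorem already gives $\gamma_P(M)\le\gamma_{\mathcal{S}_n}(M)$; the whole content is therefore the reverse inequality $\gamma_E(M)\le\gamma_P(M)$ for an arbitrary elementary abelian $p$-subgroup $E\le\mathcal{S}_n$, and by Theorem~\ref{maxElt} it suffices to treat the maximal ones listed in Theorem~\ref{conjEltAb}. The strategy is to produce a single closed form for $\gamma_E(M)$ that is uniform in $E$ and then bound it by $\gamma_P(M)$ with an elementary count.

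First I would establish the formula for $\gamma_E(M)$, recasting the analysis of Sections 3 and 4 in coordinate-free terms. The restriction $M\downarrow_E$ is again a permutation module, so $M\downarrow_E\cong\bigoplus_{\mathcal{O}}\mathbf{k}[\mathcal{O}]$, the sum running over the $E$-orbits $\mathcal{O}$ of tabloids, with $\mathbf{k}[\mathcal{O}]\cong\operatorname{Ind}_{\operatorname{Stab}_E(t)}^{E}\mathbf{k}$ for any $t\in\mathcal{O}$; each such summand is indecomposable (the group algebra of an elementary abelian $p$-group is local) and is projective exactly when $\operatorname{Stab}_E(t)=1$. Since $E$ is abelian, Mackey's formula collapses $\operatorname{Ind}_{H_1}^{E}\mathbf{k}\otimes\operatorname{Ind}_{H_2}^{E}\mathbf{k}$ to a multiple of $\operatorname{Ind}_{H_1\cap H_2}^{E}\mathbf{k}$, so a tensor product $\bigotimes_i\mathbf{k}[\mathcal{O}_i]$ of orbit-summands is non-projective if and only if $\bigcap_i\operatorname{Stab}_E(t_i)\ne 1$, equivalently, some rank-one subgroup $E_0\le E$ is contained in every one of the stabilisers. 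Running the bookkeeping of Theorem~\ref{maxinvt} --- an inclusion--exclusion over the finitely many rank-one subgroups of $E$, followed by a squeeze on $c_n^{E}(M)$ of the kind permitted by Remark~\ref{count} --- then yields
$$\gamma_E(M)\;=\;\max_{\substack{E_0\le E\\ \operatorname{rank}E_0=1}}\ \#\bigl\{\text{tabloids }t\ :\ E_0\cdot t=t\bigr\}.$$

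Granting this, the comparison is short. Any rank-one subgroup $E_0\le E\le\mathcal{S}_n$ is generated by a permutation $g$ of order $p$, whose cycle type is $(p,\dots,p,1,\dots,1)$; fix one of its $p$-cycles $D$, so $|D|=p$. A tabloid $t$ is fixed by $g$ exactly when its second row is a union of $\langle g\rangle$-orbits, and in particular its second row meets $D$ in $\emptyset$ or in all of $D$. Hence
$$\#\bigl\{t:E_0\cdot t=t\bigr\}\;\le\;\#\bigl\{S\subseteq\{1,\dots,n\},\ |S|=r\ :\ S\cap D\in\{\emptyset,D\}\bigr\}\;=\;{n-p\choose r}+{n-p\choose r-p},$$
and the right-hand side is exactly ${n-p\choose\lambda_1}+{n-p\choose\lambda_2}$, which equals $\gamma_P(M)$ by (\ref{4.3}); it is also the value of the displayed maximum when $E=P$, attained at a ``coordinate'' subgroup such as $\langle(1,2,\dots,p)\rangle$. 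Thus $\gamma_E(M)\le\gamma_P(M)$ for every $E$, and combined with $\gamma_P(M)\le\gamma_{\mathcal{S}_n}(M)$ this gives $\gamma_{\mathcal{S}_n}(M)=\gamma_P(M)$, which together with (\ref{4.3}) and Theorem~\ref{maxElt} also yields the value of $\gamma_{\mathcal{S}_n}(M)$ claimed in Theorem~\ref{mytheorem}.

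The step I expect to be the genuine obstacle is the uniform closed form for $\gamma_E(M)$, not the final count. One has to verify indecomposability and the projectivity criterion for the $\mathbf{k}[\mathcal{O}]$, control tensor products of induced modules over the abelian $p$-group $E$ via Mackey, and carry out the inclusion--exclusion and squeeze carefully. The delicate point is that an $E$-orbit of tabloids can be larger than $p$ --- this happens precisely when $E$ involves a factor $V_c(p)$ with $c\ge 2$ --- so a point stabiliser may have rank strictly between $0$ and $\operatorname{rank}E$; adapting the Section~3 decomposition to this situation, and confirming that it is still the rank-one subgroups that govern the exponential growth rate, is where the real work lies. Once that formula is in place, the paragraph above is essentially a one-line containment between two families of $r$-subsets of $\{1,\dots,n\}$.
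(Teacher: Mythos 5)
This statement is labelled a \emph{Conjecture} in the paper; the paper does not prove it. It is stated in Section 6 as an open problem, supported by Theorem~\ref{mytheorem} (which settles the two-part case) and by Magma experiments, so there is no proof in the paper to compare yours against. What you have written is also not a proof of the conjecture. Every ingredient in your argument is specific to $M = M^{(n-r,r)}$: you count $r$-subsets, the bound you derive is ${n-p\choose r}+{n-p\choose r-p}$, and your closing sentence deduces Theorem~\ref{mytheorem}. The conjecture asserts that $\gamma_{\mathcal{S}_n}(M^\lambda)$ is an integer for \emph{every} $\lambda\vdash n$; for two-part $\lambda$ integrality is already an immediate corollary of Theorem~\ref{mytheorem}, so the open content of the conjecture lives entirely in partitions with three or more parts, which your proposal does not touch.

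Moreover, the technical core of your proposal --- the formula $\gamma_E(M)=\max_{E_0}\#\{t:E_0\cdot t=t\}$, the maximum over rank-one subgroups $E_0\le E$ --- is asserted rather than proved, and you are right to single it out as the real obstacle. The direction $\ge$ is easy: each rank-one $E_0$ is cyclic of order $p$, and for a permutation module of $\mathbb{Z}/p$ the core is exactly the span of the fixed basis elements, so $\gamma_{E_0}(M)$ equals the fixed-point count and is $\le\gamma_E(M)$. The direction $\le$ is precisely what the paper's Theorem~\ref{maxinvt} proves for $P$, and what the terse ``as in Theorem~\ref{maxinvt}'' of Section 5 asserts for general $E$; your sketch via Mackey, inclusion-exclusion and a squeeze is the right outline, but when $E$ contains a factor $V_c(p)$ with $c\ge 2$ the stabilizer of a tabloid in that factor can have any rank between $0$ and $c$, the block-set invariant $\mathcal{C}_\alpha$ no longer determines the isomorphism type of the orbit summand, and the claim that a single rank-one subgroup governs the exponential growth of $c_n^E(M)$ genuinely requires an argument. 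If you do establish that formula for arbitrary $\lambda$ and arbitrary elementary abelian $E\le\mathcal{S}_n$, the conjecture would follow at once: by Theorem~\ref{maxElt}, $\gamma_{\mathcal{S}_n}(M^\lambda)$ would become a maximum, over the finitely many rank-one $p$-subgroups of $\mathcal{S}_n$, of fixed-tabloid counts, which are integers. That is a clean and promising reduction, but as it stands it is a programme, not a proof.
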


In fact after a number of computations of the invariant for trivial source modules over a finite group, we could not find an example of one with non-integer invariant. This suggests the following question:\\

\begin{quest} If $M$ is a trivial source module of a finite group $G$, then is $\gamma_G(M)$ always an integer?
\end{quest}

The obvious next class of modules to consider would be of Young modules which are indecomposable summands of permutation modules \cite{KErdmannYoung}. We know from \cite{B_S} that the Benson - Symonds invariant does not behave well with direct sums, in that the invariant for the direct sum is sandwiched between the maximum of the individual invariants and the sum of the individual invariants. If we can find a recursive formula using the decomposition of permutation modules to find the invariant for the Young modules, it would be a delight.\\

In particular, we know that corresponding to the partition $\lambda=(n-1,1)$,  
$$M^\lambda=\left\{\begin{array}{lr}
        Y^\lambda, &  p\mid n\\
        \mathbf{k} \oplus Y^\lambda , & p \nmid n\\
        \end{array} \right. $$ 
        
Hence, $$\gamma_{\mathcal{S}_n}(Y^\lambda)   =\left\{\begin{array}{lr}
        n-p, &  p\mid n\\
        n-p-1 , & p \nmid n\\
        \end{array} \right.$$   
        
So, we have the following result:\\
\begin{thm} If $\lambda=(n-1,1)$ then $\gamma_{\mathcal{S}_n}(Y^\lambda)=\operatorname{dim} Y^\lambda -p$.
\end{thm}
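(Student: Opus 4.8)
The plan is to derive this immediately from Theorem~\ref{mytheorem}, combined with Theorem~\ref{trivialPlus} and the decomposition of $M^{(n-1,1)}$ recorded just above the statement. Throughout I assume $p \le n$, so that $p \mid |\mathcal{S}_n|$ and the modules in play are genuinely non-projective; when $n < p$ the module $M^{(n-1,1)}$ is projective, $\gamma$ vanishes, and the formula is vacuous (indeed false, as $\dim Y^\lambda - p$ is then negative), so that range is excluded by hypothesis as in the rest of the paper.

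First I would specialize Theorem~\ref{mytheorem} to $\lambda = (\lambda_1,\lambda_2) = (n-1,1)$, obtaining
$$\gamma_{\mathcal{S}_n}(M^{(n-1,1)}) = \binom{n-p}{\,n-1\,} + \binom{n-p}{\,1\,}.$$
Since $p \ge 2$ we have $n-1 > n-p \ge 0$, so the first binomial coefficient vanishes, and the second equals $n-p$. Hence $\gamma_{\mathcal{S}_n}(M^{(n-1,1)}) = n-p$.

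Next I would split according to whether $p \mid n$. If $p \mid n$, then $M^\lambda = Y^\lambda$ and $\dim Y^\lambda = n$, so directly $\gamma_{\mathcal{S}_n}(Y^\lambda) = n-p = \dim Y^\lambda - p$. If $p \nmid n$, then $M^\lambda = \mathbf{k} \oplus Y^\lambda$ with $\dim Y^\lambda = n-1$; applying Theorem~\ref{trivialPlus} (legitimate because $p \mid |\mathcal{S}_n|$) gives $\gamma_{\mathcal{S}_n}(M^\lambda) = 1 + \gamma_{\mathcal{S}_n}(Y^\lambda)$, whence $\gamma_{\mathcal{S}_n}(Y^\lambda) = (n-p)-1 = \dim Y^\lambda - p$. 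In both cases the asserted identity holds, completing the proof.

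There is essentially no obstacle: all the real content sits in Theorem~\ref{mytheorem}, and the only points requiring a moment's care are the vanishing of $\binom{n-p}{n-1}$ for $p \ge 2$ and matching $\dim Y^\lambda$ to the residue of $n$ modulo $p$. If desired, one could append the remark that this exhibits $Y^{(n-1,1)}$ as a module whose $\gamma$-invariant falls short of its dimension by exactly $p$, the order of a $p$-cycle, which is consistent with the block-counting picture of Section~4 where the single "missing" block $B_k$ accounts for precisely this deficiency.
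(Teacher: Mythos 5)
Your proof is correct and takes the same route the paper does: specialize Theorem~\ref{mytheorem} to $\lambda=(n-1,1)$ to get $\gamma_{\mathcal{S}_n}(M^\lambda)=n-p$, then use the stated decomposition of $M^{(n-1,1)}$ together with Theorem~\ref{trivialPlus} in the $p\nmid n$ case, and match with $\dim Y^\lambda$. The only (harmless) additions are your explicit note that $\binom{n-p}{n-1}=0$ and your remark on the degenerate range $n<p$.
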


The next class of modules which is extensively studied in the representation theory of symmetric groups is of Specht modules $S^\lambda$. The main obstruction to our method in the case of Specht modules is that the indecomposable summands of tensor products of Specht modules is not known. If we have the knowledge of the number of composition factors of higher tensor powers of $S^\lambda$ or even the number of composition factors of the socle of higher tensor powers of $S^\lambda$, there is some hope to work out the invariant.\\

\newpage

\appendix

\section{}

\begin{lem} (Chu-Vandermonde identity) \label{chuvand}
$$\sum_i {r \choose i}{s \choose n-i}={r+s \choose n}$$
\end{lem}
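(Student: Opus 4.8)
The plan is to derive the identity by comparing coefficients in the polynomial identity $(1+x)^r(1+x)^s=(1+x)^{r+s}$, where $x$ is an indeterminate. First I would apply the binomial theorem to each factor on the left, writing $(1+x)^r=\sum_i {r \choose i}x^i$ and $(1+x)^s=\sum_j {s \choose j}x^j$ (both finite sums). Multiplying these two expansions and grouping terms by degree, the coefficient of $x^n$ on the left-hand side is $\sum_i {r \choose i}{s \choose n-i}$, the sum being taken over all integers $i$, since only finitely many terms are nonzero.

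Next I would apply the binomial theorem to $(1+x)^{r+s}$ itself, so that the coefficient of $x^n$ on the right-hand side is ${r+s \choose n}$. Since the two sides of $(1+x)^r(1+x)^s=(1+x)^{r+s}$ are equal as polynomials, their $x^n$-coefficients coincide, which is precisely the claimed identity. Equivalently, one can argue combinatorially: partition a set of $r+s$ elements into two blocks of sizes $r$ and $s$, and count the $n$-element subsets of the whole set by recording how many elements are taken from each block — choosing $i$ from the first block forces $n-i$ from the second — which again yields $\sum_i {r \choose i}{s \choose n-i}={r+s \choose n}$. I would keep whichever phrasing reads more cleanly alongside the other appendix identities.

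I do not expect any genuine obstacle here; the statement is classical. The only point worth a remark is the range of the summation index: writing $\sum_i$ without explicit bounds is legitimate precisely because ${r \choose i}=0$ unless $0\le i\le r$ and ${s \choose n-i}=0$ unless $0\le n-i\le s$, so the sum is automatically finite and the lemma may be invoked in this unrestricted form wherever it is needed later in the appendix.
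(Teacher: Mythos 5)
Your proof is correct; both the generating-function argument (comparing the $x^n$-coefficient in $(1+x)^r(1+x)^s=(1+x)^{r+s}$) and the combinatorial double-count are standard, valid derivations of Chu--Vandermonde, and your remark about the implicit summation range is accurate. Note, however, that the paper supplies no proof of this lemma at all---it is stated as a classical identity and used directly in the appendix---so there is no paper argument to compare against; you have simply filled in the omitted standard proof.
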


This can be generalized to the following:
\begin{lem} 
$$\sum_{\substack {\mu \models r }}{p \choose \mu _1}{p \choose \mu _2}\cdots {p \choose \mu _d}={dp \choose r}$$
\end{lem}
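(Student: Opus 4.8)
The plan is to prove the identity by induction on $d$, feeding the Chu--Vandermonde identity of Lemma~\ref{chuvand} into the inductive step. Before starting I would pin down the convention being used: a composition $\mu\models r$ appearing with $d$ binomial factors means a tuple $\mu=(\mu_1,\dots,\mu_d)$ of \emph{non-negative} integers (zero parts allowed) with $\mu_1+\cdots+\mu_d=r$. This is the reading under which each ${p\choose\mu_i}$ is meaningful and under which the recursion below closes; it is also the convention forced on us by the way such sums arise in Sections~3 and~4.

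For the base case $d=1$ the only composition of $r$ into one part is $(r)$, so the left-hand side equals ${p\choose r}={1\cdot p\choose r}$, as claimed. (Equivalently one may start at $d=2$, where the statement is exactly Lemma~\ref{chuvand} with the substitutions $r\mapsto p$, $s\mapsto p$, $n\mapsto r$, and $i=\mu_1$.) For the inductive step, assume the result for $d-1$. Grouping the terms of the $d$-fold sum according to the value $j$ of the last part $\mu_d$, and then applying the inductive hypothesis to the remaining $(d-1)$-fold inner sum over compositions of $r-j$, gives
\begin{align*}
\sum_{\substack{\mu\models r\\ \text{into }d\text{ parts}}}{p\choose \mu_1}\cdots{p\choose \mu_d}
&=\sum_{j}{p\choose j}\sum_{\substack{\nu\models r-j\\ \text{into }d-1\text{ parts}}}{p\choose \nu_1}\cdots{p\choose \nu_{d-1}}\\
&=\sum_{j}{p\choose j}{(d-1)p\choose r-j}\\
&={dp\choose r},
\end{align*}
where the last equality is Chu--Vandermonde (Lemma~\ref{chuvand}) with $r\mapsto p$, $s\mapsto (d-1)p$, $n\mapsto r$. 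This closes the induction and proves the lemma.

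I do not expect a genuine obstacle here; the only thing requiring care is the bookkeeping with the ``into $d$ parts'' conventions, so that the inner sum over $\nu$ really ranges over all compositions of $r-j$ into $d-1$ non-negative parts, and the outer index $j$ over $0\le j\le r$ (terms with $j>r$ contribute $0$ anyway, since ${(d-1)p\choose r-j}=0$, so the summation range may safely be left unbounded). As a sanity check and an alternative route, I would also mention the generating-function proof — comparing the coefficient of $x^r$ on the two sides of $(1+x)^{dp}=\bigl((1+x)^p\bigr)^d$ — and the equivalent combinatorial proof: partition a set of $dp$ elements into $d$ blocks of size $p$ and count the $r$-element subsets according to how many elements they take from each block. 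Either of these yields the identity directly without induction, and I might record one of them as a remark.
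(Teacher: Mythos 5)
Your proof is correct, and it fills in exactly what the paper leaves implicit: the paper presents this lemma simply as a generalization of Chu--Vandermonde with no written proof, and your induction on $d$ using Lemma~\ref{chuvand} in the inductive step is the natural way to make that precise. Your clarification of the ``$d$ nonnegative parts'' convention and the alternative generating-function/combinatorial arguments are accurate and in keeping with the spirit of the appendix.
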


If we fix the number of parts of the composition then using the inclusion - exclusion principle, we have:
\begin{lem} \label{intodparts}
$$\sum_{\substack {\mu \models r \\ \text{into \textit{d} parts}}}{p \choose \mu _1}{p \choose \mu _2}\cdots {p \choose \mu _d} = \sum_{i=0}^{d} (-1)^i{d \choose i} {(d-i)p \choose r}$$
\end{lem}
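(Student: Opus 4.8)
The plan is to prove the identity by a straightforward inclusion--exclusion argument, feeding on the previous (unlabelled) generalized Vandermonde lemma as a black box. Write $W_d(r)$ for the weighted sum $\sum {p\choose\mu_1}\cdots{p\choose\mu_d}$ taken over all \emph{weak} compositions $\mu=(\mu_1,\dots,\mu_d)$ of $r$ into $d$ parts, i.e.\ with the $\mu_j\geq 0$ (and the usual convention ${p\choose\mu_j}=0$ when $\mu_j>p$, which never affects anything). By the generalized Vandermonde identity above, $W_d(r)={dp\choose r}$. The left-hand side of Lemma~\ref{intodparts} is the same weighted sum restricted to the \emph{strict} compositions, i.e.\ those $\mu$ with every $\mu_j\geq 1$, so the task is to express this restricted sum in terms of the unrestricted ones.

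First I would pass from strict to weak compositions by inclusion--exclusion on the set of coordinates forced to vanish. For $T\subseteq\{1,\dots,d\}$ let $A_T$ be the weighted sum of $\prod_{j=1}^d{p\choose\mu_j}$ over the weak compositions of $r$ into $d$ parts having $\mu_j=0$ for all $j\in T$. Each forced zero contributes the factor ${p\choose 0}=1$, so $A_T$ depends only on $|T|$ and equals the weighted count of weak compositions of $r$ into the remaining $d-|T|$ coordinates, that is $A_T=W_{d-|T|}(r)={(d-|T|)p\choose r}$. Since the strict compositions are exactly the weak compositions avoiding every ``bad'' set $\{\mu:\mu_j=0\}$, inclusion--exclusion yields
\[
\sum_{\substack{\mu\models r\\ \text{into } d \text{ parts}}}{p\choose\mu_1}\cdots{p\choose\mu_d}
=\sum_{T\subseteq\{1,\dots,d\}}(-1)^{|T|}A_T
=\sum_{i=0}^{d}(-1)^i{d\choose i}{(d-i)p\choose r},
\]
which is the claimed formula. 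Equivalently, one can phrase the same step as binomial inversion: sorting weak compositions by their set of nonzero coordinates gives ${dp\choose r}=W_d(r)=\sum_{j=0}^d{d\choose j}g(j)$, where $g(j)$ is the left-hand side with $d$ replaced by $j$ (and $g(0)=[r=0]$), and inverting recovers $g(d)=\sum_{i=0}^d(-1)^i{d\choose i}{(d-i)p\choose r}$.

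I do not expect a genuine obstacle. The only points needing care are the bookkeeping in the inclusion--exclusion — chiefly the observation that $A_T$ depends only on $|T|$, which is immediate once one notes that a zero coordinate contributes the trivial factor ${p\choose 0}=1$ — and the degenerate cases $d=0$ or $r=0$, where both sides collapse to $[r=0]$. The substantive input, namely that the unrestricted weighted sum telescopes to the single binomial coefficient ${dp\choose r}$, is exactly the generalized Vandermonde identity already recorded above, so no new combinatorial fact is required.
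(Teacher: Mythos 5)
Your proof is correct and follows exactly the route the paper indicates: the paper introduces Lemma \ref{intodparts} with the remark that it follows ``using the inclusion--exclusion principle'' from the preceding generalized Vandermonde identity, but does not write out the argument, whereas you supply the details. Your inclusion--exclusion over the set $T$ of coordinates forced to vanish, together with the observation that $A_T$ depends only on $|T|$ because ${p\choose 0}=1$, is precisely the intended argument, and the binomial-inversion rephrasing you offer is the standard equivalent form.
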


A more general form of the above Lemma would be the following: 
\begin{lem} \label{intod+1parts}
$$\sum_{\substack {\mu \models r \\ \text{into \textit{d}+1 parts}}}{p \choose \mu _1}{p \choose \mu _2}\cdots {p \choose \mu _d}{a_0 \choose \mu_{d+1}} = \sum_{i=0}^{d} (-1)^i{d \choose i} \Bigg({(d-i)p+a_0 \choose r}- {(d-i)p \choose r}\Bigg) $$
\end{lem}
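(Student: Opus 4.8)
The plan is to prove the identity by a single inclusion--exclusion, directly generalizing the argument behind Lemma \ref{intodparts}. Write $[x^r]F(x)$ for the coefficient of $x^r$ in a polynomial $F$. Since $(1+x)^p=\sum_{j}\binom{p}{j}x^j$ and $(1+x)^{a_0}=\sum_j\binom{a_0}{j}x^j$, the left-hand side is the $x^r$-weighted count of sequences $\mu=(\mu_1,\dots,\mu_{d+1})$ of \emph{strictly positive} integers summing to $r$, where each of $\mu_1,\dots,\mu_d$ contributes a factor $\binom{p}{\mu_i}$ and $\mu_{d+1}$ contributes $\binom{a_0}{\mu_{d+1}}$.

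First I would pass from ``all parts positive'' to ``all parts nonnegative'' by inclusion--exclusion over the subset $S\subseteq\{1,\dots,d+1\}$ of indices forced to be zero. Because $\binom{p}{0}=\binom{a_0}{0}=1$, fixing $\mu_i=0$ for $i\in S$ just removes those factors, and the residual unconstrained sum collapses to a single coefficient extraction: if $d+1\notin S$ and $|S|=i$ it equals $[x^r](1+x)^{(d-i)p+a_0}=\binom{(d-i)p+a_0}{r}$, and if $d+1\in S$ with $|S\setminus\{d+1\}|=i$ it equals $[x^r](1+x)^{(d-i)p}=\binom{(d-i)p}{r}$. There are $\binom{d}{i}$ subsets of each type, carrying signs $(-1)^i$ and $(-1)^{i+1}$ respectively, and summing over $i$ yields
$$\sum_{i=0}^d(-1)^i\binom{d}{i}\left(\binom{(d-i)p+a_0}{r}-\binom{(d-i)p}{r}\right),$$
which is exactly the right-hand side.

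An alternative route, perhaps cleaner to write, is to isolate the last part: the left-hand side equals $T-\sum_{\mu\models r\text{ into }d\text{ parts}}\binom{p}{\mu_1}\cdots\binom{p}{\mu_d}$, where $T$ is the same sum but allowing $\mu_{d+1}\ge 0$. One evaluates $T=\sum_{i=0}^d(-1)^i\binom{d}{i}\binom{(d-i)p+a_0}{r}$ by running the inclusion--exclusion of Lemma \ref{intodparts} on the first $d$ parts (the extra factor $(1+x)^{a_0}$ simply shifts the exponent from $(d-i)p$ to $(d-i)p+a_0$), and the subtracted sum is exactly the right-hand side of Lemma \ref{intodparts}; combining the two expressions gives the claim.

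I do not expect a genuine obstacle. The only points needing care are the convention that a ``composition into $k$ parts'' has all parts strictly positive (this is what makes the $\binom{\cdot}{0}=1$ collapse work cleanly into coefficient extractions), the sign bookkeeping in the two cases $d+1\in S$ versus $d+1\notin S$, and a sanity check of the degenerate cases (such as $r=0$, or $d=0$) against the conventions for binomial coefficients with vanishing arguments.
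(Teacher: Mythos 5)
Your proof is correct; the paper gives no written proof for this lemma (it is merely introduced as ``a more general form'' of Lemma~\ref{intodparts}), and your inclusion--exclusion argument supplies exactly what is left implicit. Of your two routes, the second---evaluating $T$ by running inclusion--exclusion over the first $d$ coordinates with the $(1+x)^{a_0}$ factor carried along, then subtracting the $\mu_{d+1}=0$ contribution via Lemma~\ref{intodparts}---is almost certainly the intended derivation, since it makes the difference $\binom{(d-i)p+a_0}{r}-\binom{(d-i)p}{r}$ on the right-hand side appear naturally; the direct inclusion--exclusion over all $S\subseteq\{1,\dots,d+1\}$ lands in the same place with only a bit more sign bookkeeping.
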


Adding Lemma \ref{intodparts} and Lemma \ref{intod+1parts} we get:
\begin{lem} \label{addingdandd+1}
$$\sum_{\substack {\mu \models r \\ \text{into \textit{d} parts}}}{p \choose \mu _1}{p \choose \mu _2}\cdots {p \choose \mu _d}+\sum_{\substack {\mu \models r \\ \text{into \textit{d}+1 parts}}}{p \choose \mu _1}{p \choose \mu _2}\cdots {p \choose \mu _d}{a_0 \choose \mu_{d+1}}$$ $$ = \sum_{i=0}^{d} (-1)^i{d \choose i} {(d-i)p+a_0 \choose r} $$
\end{lem}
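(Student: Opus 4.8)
The plan is to derive Lemma~\ref{addingdandd+1} directly from the two lemmas preceding it. The right-hand side of Lemma~\ref{intodparts} is $\sum_{i=0}^{d}(-1)^i\binom{d}{i}\binom{(d-i)p}{r}$, and the right-hand side of Lemma~\ref{intod+1parts} is $\sum_{i=0}^{d}(-1)^i\binom{d}{i}\big(\binom{(d-i)p+a_0}{r}-\binom{(d-i)p}{r}\big)$. Both are sums over the same index set $0\le i\le d$ with the same coefficient $(-1)^i\binom{d}{i}$, so I would simply add them termwise: the $-\binom{(d-i)p}{r}$ contributed by the second cancels the $+\binom{(d-i)p}{r}$ contributed by the first, and what remains is $\sum_{i=0}^{d}(-1)^i\binom{d}{i}\binom{(d-i)p+a_0}{r}$, which is exactly the asserted formula. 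No further work is needed; the one thing to verify is that the two index ranges genuinely coincide, which they do.

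Since this amounts to a one-line deduction, I would also sketch a self-contained argument as a sanity check. Read the left-hand side combinatorially: we are selecting $r$ elements from $d$ pairwise disjoint sets of size $p$ together with one further set of size $a_0$, under the requirement that each of the $d$ size-$p$ sets contributes at least one chosen element, while the size-$a_0$ set is unconstrained. The first sum on the left enumerates the selections that take nothing from the size-$a_0$ set, the second enumerates those taking at least one element from it, so together they count all admissible selections. Dropping the ``at least one'' requirements there would be $\binom{dp+a_0}{r}$ selections; applying inclusion–exclusion over the sub-collection of size-$p$ sets declared empty, and evaluating the resulting weighted sums by the generalized Chu–Vandermonde identity built on Lemma~\ref{chuvand}, yields $\sum_{i=0}^{d}(-1)^i\binom{d}{i}\binom{(d-i)p+a_0}{r}$, in agreement with the first computation. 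I expect no genuine obstacle in either route; the only care required is the bookkeeping of which parts of the compositions are permitted to vanish, so I would present the first (additive) argument as the proof and note the second in passing.
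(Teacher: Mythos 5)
Your additive one-line derivation is exactly how the paper proves this lemma: it is stated immediately after Lemmas~\ref{intodparts} and~\ref{intod+1parts} with the remark that one simply adds them, and the $\binom{(d-i)p}{r}$ terms cancel as you observe. The combinatorial sanity check you sketch is sound but superfluous for the purposes of matching the paper's argument.
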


Note that in the above Lemma, the compositions with any part $>p$ do not contribute to the sum. So if we were to enforce each part of the composition to be less than $p$ then we only need to avoid the partitions with some part equal to $p$. Using inclusion-exclusion principle, we have:
\smallskip
\begin{lem} \label{simp3}
$$ \sum_{\substack {\mu \models jp+b_0 \\ \text{into \textit{d} parts}\\ \text{each} < \text{p}}}
{p \choose \mu _1}{p \choose \mu _2}\cdots {p \choose \mu _d}+\sum_{\substack {\mu \models jp+b_0 \\ \text{into \textit{d}+1 parts}\\ \text{each} < \text{p}}}
{p \choose \mu _1}{p \choose \mu _2}\cdots {p \choose \mu _d}{a_0 \choose \mu _{d+1}} $$ $$= \sum_{h=0}^{j} (-1)^h{d \choose h} \sum_{i=0}^{d-h} (-1)^i {d-h \choose i} {(d-h-i)p+a_0 \choose (j-h)p+b_0}   $$
\end{lem}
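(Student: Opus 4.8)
The plan is to remove the constraint ``each part less than $p$'' by an inclusion--exclusion over the subset of the first $d$ coordinates that are forced to equal $p$, and then to feed the resulting unrestricted sums into Lemma \ref{addingdandd+1}.

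First I would observe that the constraint is genuinely only on the first $d$ coordinates. In both sums on the left-hand side the factor ${a_0 \choose \mu_{d+1}}$ vanishes as soon as $\mu_{d+1}\geq p$, because $a_0<p$, so requiring $\mu_{d+1}<p$ is vacuous; and ${p \choose \mu_i}=0$ whenever $\mu_i>p$, so on the support of the summand $\mu_i\leq p$ for $1\leq i\leq d$, hence $[\mu_i<p]=1-[\mu_i=p]$ there (Iverson brackets). Expanding $\prod_{i=1}^{d}\bigl(1-[\mu_i=p]\bigr)$ therefore rewrites the left-hand side as an alternating sum over subsets $S\subseteq\{1,\dots,d\}$ of the same weighted composition sums, but now with the extra requirement $\mu_i=p$ for all $i\in S$.

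Next, fix $S$ with $|S|=h$. Each coordinate in $S$ contributes a factor ${p \choose p}=1$, and deleting those coordinates leaves a weighted count of compositions of $jp+b_0-hp=(j-h)p+b_0$ into $d-h$ parts, respectively into $d-h+1$ parts with the last weighted by ${a_0 \choose \cdot}$; this is exactly the left-hand side of Lemma \ref{addingdandd+1} with $r$ replaced by $(j-h)p+b_0$ and $d$ replaced by $d-h$. It is empty unless $(j-h)p+b_0\geq0$, i.e.\ $h\leq j$ (since $0\leq b_0<p$), while ${d \choose h}=0$ for $h>d$; so the alternating sum over $S$ collapses to $\sum_{h=0}^{j}(-1)^{h}{d \choose h}$ times that bracketed pair of sums. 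Applying Lemma \ref{addingdandd+1} turns each bracket into $\sum_{i=0}^{d-h}(-1)^{i}{d-h \choose i}{(d-h-i)p+a_0 \choose (j-h)p+b_0}$, and substituting back yields precisely the claimed right-hand side.

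The only delicate point, and the step I would check most carefully, is the bookkeeping in the inclusion--exclusion: confirming that pinning $h$ of the first $d$ coordinates to the value $p$ really does produce the factor $1$ and leave a sum of exactly the shape to which Lemma \ref{addingdandd+1} applies verbatim, and that the outer index may be truncated at $h=j$. I would sanity-check the small cases $d=1$ and $j=0$ by hand, since this identity feeds directly into Case 3 of the main computation, where an off-by-one in the index ranges would be fatal.
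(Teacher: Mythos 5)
Your proposal is correct and follows essentially the same route the paper intends: the paper's own justification for Lemma \ref{simp3} is the one-line remark preceding it, namely that since parts $>p$ already contribute nothing, one enforces ``each part $<p$'' by inclusion--exclusion on parts equal to $p$ and then applies Lemma \ref{addingdandd+1}. You have merely supplied the bookkeeping (the Iverson-bracket expansion over subsets $S$, the observation that the $(d+1)$st constraint is vacuous because $a_0<p$, and the truncation at $h=j$ via ${d\choose h}$ and vanishing of the shifted sums), all of which checks out.
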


\smallskip

\begin{lem} \label{simp1}
$$\sum_{i=0}^{n} (-1)^i {n \choose i}{n-i \choose m}=\left\{\begin{array}{lr}
        1, &  m= n\\
        0, & \text{otherwise}
        \end{array} \right. $$ 
\end{lem}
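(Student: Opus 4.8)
The plan is to read the left-hand side as a coefficient in a generating function that telescopes via the binomial theorem. First I would introduce a formal variable $x$ and observe, directly from the binomial theorem applied to $(y-1)^n$ with $y=1+x$,
$$\sum_{i=0}^{n} (-1)^i {n \choose i}(1+x)^{n-i} = \big((1+x)-1\big)^n = x^n.$$
Then I expand $(1+x)^{n-i}=\sum_{m\geq 0}{n-i \choose m}x^m$ on the left and compare the coefficient of $x^m$ on both sides. On the left this coefficient is exactly $\sum_{i=0}^{n}(-1)^i{n \choose i}{n-i \choose m}$, and on the right it is $1$ when $m=n$ and $0$ otherwise, which is the assertion.

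There is also a purely algebraic route avoiding generating functions: I would use the ``subset of a subset'' identity ${n \choose i}{n-i \choose m}={n \choose m}{n-m \choose i}$ (valid for all $i\geq 0$, both sides vanishing once $i>n-m$), factor ${n \choose m}$ out of the sum, and recognize the remainder as an alternating row sum:
$$\sum_{i=0}^{n} (-1)^i {n \choose i}{n-i \choose m} = {n \choose m}\sum_{i=0}^{n-m} (-1)^i {n-m \choose i} = {n \choose m}\,(1-1)^{n-m}.$$
With the convention $0^0=1$ this equals $1$ when $m=n$, equals $0$ when $0\leq m<n$, and equals $0$ when $m>n$ since then ${n \choose m}=0$. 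Either argument is a couple of lines.

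I do not expect a genuine obstacle here: this is the classical fact that the $n$-th finite difference annihilates every polynomial of degree less than $n$, specialized to $t\mapsto{t \choose m}$, a polynomial of degree $m$ in $t$. The only points that need a moment's care are the boundary conventions — keeping $0^0=1$ in force so the $m=n$ case genuinely returns $1$ rather than being swallowed by the ``otherwise'' branch, and noting that the case $m>n$ is already covered because every ${n-i \choose m}$ vanishes. I would therefore state the lemma together with these remarks and cite one of the two computations above.
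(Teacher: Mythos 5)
Your second route — rewriting ${n \choose i}{n-i \choose m}={n \choose m}{n-m \choose i}$, factoring out ${n \choose m}$, and collapsing the remaining alternating sum to $(1-1)^{n-m}$ — is exactly the paper's proof, and your handling of the boundary cases is correct. The generating-function variant you give first is a mild repackaging of the same computation, so there is nothing further to compare.
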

\begin{proof}
\begin{eqnarray*}
\sum_{i=0}^{n} (-1)^i {n \choose i}{n-i \choose m}&=&\sum_{i=0}^{n} (-1)^i {n \choose m}{n-m \choose i}\\
&=&{n \choose m}\sum_{i=0}^{n} (-1)^i {n-m \choose i}\\
&=& \left\{\begin{array}{lr}
        1, &  m= n\\
        0, & \text{otherwise}
        \end{array} \right.\\
\end{eqnarray*}
\end{proof}

\smallskip
\begin{lem} \label{coeff3}
$$(-1)^m \sum_{i=m}^{k-1} (-1)^i {k-1 \choose i} {i \choose m} \sum_{j=0}^{i-m} {i-m \choose j} {k-i \choose r-j}  
$$
$$=\left\{\begin{array}{lr}
        1, &  m=k-1,  r=0,1\\
        0, & \text{otherwise}
        \end{array} \right. $$ 
\end{lem}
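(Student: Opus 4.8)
The plan is to collapse the nested sum one layer at a time, using only the identities already recorded in the appendix together with standard binomial manipulations. First I would evaluate the innermost sum over $j$. Since ${i-m \choose j}=0$ for $j>i-m$ and ${k-i \choose r-j}=0$ for $j>r$, the sum $\sum_{j=0}^{i-m}{i-m \choose j}{k-i \choose r-j}$ is exactly a Chu--Vandermonde convolution (Lemma \ref{chuvand}), hence equals ${(i-m)+(k-i) \choose r}={k-m \choose r}$. The decisive feature is that this value is independent of $i$, so it factors out of the $i$-sum, reducing the left-hand side to
$$(-1)^m {k-m \choose r}\sum_{i=m}^{k-1}(-1)^i{k-1 \choose i}{i \choose m}.$$

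Next I would handle $S:=\sum_{i=m}^{k-1}(-1)^i{k-1 \choose i}{i \choose m}$. Applying the subset-of-a-subset identity ${k-1 \choose i}{i \choose m}={k-1 \choose m}{k-1-m \choose i-m}$ and reindexing with $j=i-m$ gives
$$S={k-1 \choose m}(-1)^m\sum_{j=0}^{k-1-m}(-1)^j{k-1-m \choose j}={k-1 \choose m}(-1)^m(1-1)^{k-1-m}.$$
This vanishes unless $k-1-m=0$, and equals $(-1)^{k-1}$ when $m=k-1$. Substituting back, the whole expression is $0$ when $m\ne k-1$, while for $m=k-1$ it equals $(-1)^{k-1}{1 \choose r}(-1)^{k-1}={1 \choose r}$, which is $1$ for $r\in\{0,1\}$ and $0$ for $r\geq 2$. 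This is precisely the asserted value.

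Since every step is a direct application of a known identity, I do not expect a genuine obstacle; the only points needing care are the sign bookkeeping — the prefactor $(-1)^m$ cancelling against the $(-1)^m$ produced by the reindexing of $S$ — and isolating the degenerate case $k-1-m=0$ (where $(1-1)^{0}=1$) so that the residual dependence on $r$ through ${1 \choose r}$ is read off correctly.
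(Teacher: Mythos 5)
Your proof is correct and follows essentially the same route as the paper's: evaluate the inner $j$-sum by Chu--Vandermonde to get the $i$-independent factor $\binom{k-m}{r}$, apply the subset-of-a-subset identity $\binom{k-1}{i}\binom{i}{m}=\binom{k-1}{m}\binom{k-1-m}{i-m}$, reindex, and invoke the alternating binomial sum $(1-1)^{k-1-m}$ to isolate $m=k-1$, at which point $\binom{1}{r}$ gives the stated case split.
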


\smallskip
\begin{proof}
$$(-1)^m \sum_{i=m}^{k-1} (-1)^i {k-1 \choose i} {i \choose m} \sum_{j=0}^{i-m} {i-m \choose j} {k-i \choose r-j}$$
\begin{eqnarray*}
&=& (-1)^m \sum_{i=m}^{k-1} (-1)^i {k-1 \choose i} {i \choose m} {k-m \choose r}  \text{     Using Lemma } \ref{chuvand}   \\
&=& (-1)^m {k-m \choose r} \sum_{i=m}^{k-1} (-1)^i {k-1 \choose i} {i \choose m} \\
&=& (-1)^m {k-m \choose r} \sum_{i=m}^{k-1} (-1)^i {k-1 \choose m} {k-1-m \choose i-m} \\
&=& (-1)^m {k-m \choose r} {k-1 \choose m} \sum_{i=m}^{k-1} (-1)^i  {k-1-m \choose i-m} \\
&=& (-1)^m {k-m \choose r} {k-1 \choose m} \sum_{i=0}^{k-1-m} (-1)^{m+i}  {k-1-m \choose i} \\
&=&\left\{\begin{array}{lr}
        1, &  m=k-1,  r=0,1\\
        0, & \text{otherwise}
        \end{array} \right.
\end{eqnarray*}
\end{proof}

\begin{thm} \label{caser=p}
$$k+ (k-1)\sum_{\substack {\mu \models p \\ \text{into 2 parts}}} {p \choose \mu _1} {a_0 \choose \mu _2}+\sum_{d=2} ^{k-1} {k-1 \choose d}\Bigg(\sum_{\substack {\mu \models p \\ \text{into \textit{d} parts}}}{p \choose \mu _1}{p \choose \mu _2}\cdots {p \choose \mu _d} +$$
$$\sum_{\substack {\mu \models p \\ \text{into \textit{d}+1 parts}}}{p \choose \mu _1}{p \choose \mu _2}\cdots {p \choose \mu _d} {a_0 \choose \mu _{d+1}}\Bigg) $$
$$=1+{n-p \choose p} $$
\end{thm}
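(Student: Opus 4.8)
The plan is to collapse every multiple sum on the left--hand side to a single binomial coefficient, using the Chu--Vandermonde identity and its iterates from the appendix together with one finite--difference identity.

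First I would apply Lemma~\ref{addingdandd+1} with $r=p$ to each bracketed double sum. For $2\le d\le k-1$ it shows that this bracket equals
\[
T_d:=\sum_{i=0}^{d}(-1)^i{d\choose i}{(d-i)p+a_0\choose p}.
\]
For $d=1$ only the two--part sum appears in the statement; since the one--part sum is ${p\choose p}=1$, the same lemma gives $\sum_{\mu\models p,\ \text{two parts}}{p\choose\mu_1}{a_0\choose\mu_2}=T_1-1$. I also record that $T_0={a_0\choose p}=0$, which holds precisely because $a_0<p$; this is the only place that hypothesis enters.

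Substituting, the left--hand side becomes $k+(k-1)(T_1-1)+\sum_{d=2}^{k-1}{k-1\choose d}T_d$, which simplifies to
\[
1+(k-1)T_1+\sum_{d=2}^{k-1}{k-1\choose d}T_d=1+\sum_{d=0}^{k-1}{k-1\choose d}T_d,
\]
the last step absorbing $(k-1)T_1={k-1\choose1}T_1$ and adjoining the vanishing term ${k-1\choose0}T_0$. So it remains to prove $\sum_{d=0}^{k-1}{k-1\choose d}T_d={n-p\choose p}$. Expanding $T_d$, substituting $e=d-i$, and interchanging the order of summation turns this sum into
\[
\sum_{e=0}^{k-1}{ep+a_0\choose p}\ \sum_{d=e}^{k-1}(-1)^{d-e}{k-1\choose d}{d\choose e}.
\]
Using ${k-1\choose d}{d\choose e}={k-1\choose e}{k-1-e\choose d-e}$ and $\sum_{f\ge0}(-1)^f{N\choose f}=[\,N=0\,]$ (the computation behind Lemma~\ref{simp1}), the inner sum equals ${k-1\choose e}[\,e=k-1\,]$, so only $e=k-1$ survives, leaving ${(k-1)p+a_0\choose p}={n-p\choose p}$ since $n=kp+a_0$. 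Combining, the left--hand side equals $1+{n-p\choose p}$.

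The computations are routine once the appendix lemmas are in place; the one point needing care is the bookkeeping of the degenerate indices $d=0$ and $d=1$. In particular, the statement's $d=1$ term deliberately omits the one--part composition of $p$, which removes a contribution of $(k-1){p\choose p}=k-1$; this cancels against the leading term $k$ via $k-(k-1)=1$ to produce the constant $1$ in the final answer. Beyond that I expect no genuine obstacle.
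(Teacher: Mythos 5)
Your proof is correct and follows essentially the same route as the paper: both reduce each bracketed pair of sums to $T_d=\sum_i(-1)^i\binom{d}{i}\binom{(d-i)p+a_0}{p}$ via Lemma~\ref{addingdandd+1}, fold the boundary terms into $1+\sum_{d}\binom{k-1}{d}T_d$, and then extract the coefficient of $\binom{ep+a_0}{p}$ (your change of variables $e=d-i$ is precisely the paper's "coefficient of $\binom{mp+a_0}{p}$" step) and kill it with the alternating-sum identity from Lemma~\ref{simp1}. The bookkeeping around $d=0,1$ and your observation that $T_0=\binom{a_0}{p}=0$ is where $a_0<p$ enters match the paper's simplification $k+(k-1)(\binom{p+a_0}{p}-1)=1+(k-1)\binom{p+a_0}{p}$.
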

\begin{proof}
Simplifying the sum on the left hand side using Lemma \ref{chuvand} for the first part and Lemma \ref{intodparts} and Lemma \ref{intod+1parts} for the second part, we have
$$k+ (k-1)\Big( {p+a_0 \choose p}-1 \Big ) +\sum_{d=2} ^{k-1} {k-1 \choose d}\Bigg( \sum_{i=0}^{d-1} (-1)^i{d \choose i} {(d-i)p \choose p}  +$$
$$ \sum_{i=0}^{d-1} (-1)^i{d \choose i} \Bigg({(d-i)p+a_0 \choose p}- {(d-i)p \choose p}\Bigg)  \Bigg) $$
Now simplifying the first part and using Lemma \ref{addingdandd+1} for the second part we get\\
\begin{eqnarray}
&=&1+ (k-1){p+a_0 \choose p} +\sum_{d=2} ^{k-1} {k-1 \choose d} \sum_{i=0}^{d-1} (-1)^i{d \choose i} {(d-i)p+a_0 \choose p} \notag \\
&=&1+ \sum_{d=1} ^{k-1} {k-1 \choose d} \sum_{i=0}^{d-1} (-1)^i{d \choose i} {(d-i)p+a_0 \choose p} \label{A.10}    
\end{eqnarray}
\\
The coefficient of $${mp+a_0 \choose p}$$ in the above expansion is
$$\sum_{i=0}^{k-1} (-1)^i{k-1 \choose m+i}{m+i \choose i}$$
$$=\sum_{i=0}^{k-1} (-1)^i{k-1 \choose i}{k-1-i \choose m}$$
which is equal to 1 when $m=k-1$ and $0$ otherwise by Lemma \ref{simp1}.
So (\ref{A.10}) reduces to $$1+ {(k-1)p+a_0 \choose p}$$
which is $$1+{n-p \choose p}$$
\end{proof}

\begin{thm} \label{caser<p}
$$ {a_0 \choose r}+ \sum_{d=1} ^{k-1}{k-1 \choose d}\Bigg(\sum_{\substack {\mu \models r \\ \text{into \textit{d} parts}}}{p \choose \mu _1}{p \choose \mu _2}\cdots {p \choose \mu _d} +$$ $$\sum_{\substack {\mu \models r \\ \text{into \textit{d}+1 parts}}}{p \choose \mu _1}{p \choose \mu _2}\cdots {p \choose \mu _d} {a_0 \choose \mu _{d+1}}\Bigg) $$
$$={n-p \choose r} $$
\end{thm}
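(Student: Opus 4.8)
The plan is to prove the identity by a manipulation entirely parallel to that of Theorem~\ref{caser=p}, reducing the left-hand side to a single binomial coefficient via the auxiliary lemmas in the appendix. First I would apply Lemma~\ref{addingdandd+1} to the bracketed quantity for each fixed $d$, so that
$$\sum_{\substack {\mu \models r \\ \text{into } d \text{ parts}}}{p \choose \mu _1}\cdots {p \choose \mu _d}+\sum_{\substack {\mu \models r \\ \text{into } d+1 \text{ parts}}}{p \choose \mu _1}\cdots {p \choose \mu _d}{a_0 \choose \mu _{d+1}}=\sum_{i=0}^{d}(-1)^i{d \choose i}{(d-i)p+a_0 \choose r}.$$
Substituting this into the sum over $d$, the left-hand side (after absorbing the solitary term ${a_0 \choose r}$, which is exactly the $d=0$ term of the resulting double sum, so that the range of $d$ extends to $0 \le d \le k-1$) becomes
$$\sum_{d=0}^{k-1}{k-1 \choose d}\sum_{i=0}^{d}(-1)^i{d \choose i}{(d-i)p+a_0 \choose r}.$$

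Next I would switch the order of summation to collect, for each $m$ with $0 \le m \le k-1$, the coefficient of ${mp+a_0 \choose r}$ (setting $m=d-i$). Exactly as in the proof of Theorem~\ref{caser=p}, that coefficient is
$$\sum_{i\ge 0}(-1)^i{k-1 \choose m+i}{m+i \choose i}=\sum_{i\ge 0}(-1)^i{k-1 \choose i}{k-1-i \choose m},$$
using the standard subset-of-a-subset re-expansion ${k-1 \choose m+i}{m+i \choose i}={k-1 \choose i}{k-1-i \choose m}$. By Lemma~\ref{simp1} this coefficient equals $1$ when $m=k-1$ and $0$ otherwise, so the whole sum collapses to ${(k-1)p+a_0 \choose r}$. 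Since $n=kp+a_0$, we have $(k-1)p+a_0=n-p$, giving the claimed value ${n-p \choose r}$.

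The main obstacle — really the only delicate point — is verifying that the term ${a_0 \choose r}$ sitting outside the sum over $d$ is precisely the $d=0$ contribution of the double sum obtained from Lemma~\ref{addingdandd+1}, so that the telescoping/vanishing argument of Lemma~\ref{simp1} can be applied uniformly over $0 \le m \le k-1$ rather than over $1 \le m \le k-1$ with a leftover boundary term. Here the case $r<p$ is actually cleaner than the case $r=p$: since every part of a composition of $r$ is automatically less than $p$, no "part equal to $p$" corrections are needed, and the $d=0$ term of $\sum_{i=0}^{d}(-1)^i{d\choose i}{(d-i)p+a_0\choose r}$ is simply ${a_0 \choose r}$, matching the isolated term exactly. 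One should also note ${k-1 \choose 0}=1$, so no spurious factor appears. With this bookkeeping checked, the rest is the same coefficient-extraction computation already carried out in Theorem~\ref{caser=p}.
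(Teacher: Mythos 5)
Your proof is correct and follows essentially the same route as the paper's: apply Lemma~\ref{addingdandd+1} to rewrite the bracketed inner sums, then extract the coefficient of ${mp+a_0 \choose r}$ and invoke Lemma~\ref{simp1} to see that only $m=k-1$ survives. The only (harmless) bookkeeping difference is that you absorb the isolated ${a_0 \choose r}$ as the $d=0$ term of the double sum, whereas the paper keeps the inner sum running to $i=d$, splits off the $i=d$ contribution $(-1)^d{a_0 \choose r}$, and lets the two ${a_0 \choose r}$ pieces cancel at the end; both arrive at ${(k-1)p+a_0 \choose r}={n-p \choose r}$.
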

\begin{proof}
Simplifying the sum on the left hand side using Lemma \ref{intodparts} and Lemma \ref{intod+1parts}, we have
$$ {a_0 \choose r}+ \sum_{d=1} ^{k-1}{k-1 \choose d}\Bigg( \sum_{i=0}^{d} (-1)^i{d \choose i} {(d-i)p \choose r}+$$ $$ \sum_{i=0}^{d} (-1)^i{d \choose i} \Bigg({(d-i)p+a_0 \choose r}- {(d-i)p \choose r}\Bigg) $$
\begin{eqnarray}
&=&{a_0 \choose r}+ \sum_{d=1} ^{k-1}{k-1 \choose d} \sum_{i=0}^{d} (-1)^i{d \choose i}{(d-i)p+a_0 \choose r}\notag \\
&=&{a_0 \choose r}+ \sum_{d=1} ^{k-1}{k-1 \choose d}\Bigg (  \sum_{i=0}^{d-1} (-1)^i{d \choose i}{(d-i)p+a_0 \choose r}+ (-1)^d{a_0 \choose r} \Bigg) \notag \\
&=&{a_0 \choose r}+ \sum_{d=1} ^{k-1}{k-1 \choose d} \sum_{i=0}^{d-1} (-1)^i{d \choose i}{(d-i)p+a_0 \choose r}+ \sum_{d=1} ^{k-1}{k-1 \choose d}   (-1)^d{a_0 \choose r}  \label{A.12}   
\end{eqnarray}
The coefficient of $${mp+a_0 \choose r}$$ in the expansion of the sum in the center is
$$\sum_{i=0}^{k-1} (-1)^i{k-1 \choose m+i}{m+i \choose i}$$
$$=\sum_{i=0}^{k-1} (-1)^i{k-1 \choose i}{k-1-i \choose m}$$
which is equal to 1 when $m=k-1$ and $0$ otherwise by Lemma \ref{simp1}. So, (\ref{A.12}) simplifies to \\
\begin{eqnarray*}
&=&{a_0 \choose r}+{(k-1)p+a_0 \choose r}-{a_0 \choose r}\\
&=&{(k-1)p+a_0 \choose r}\\
&=&{n-p \choose r}\\
\end{eqnarray*}
\end{proof}

\smallskip

\begin{thm} \label{caser>p}
$${k \choose q}{a_0 \choose b_0}+\sum_{d=1}^{k-1} {k-1 \choose d}\Bigg(\sum_{j=0}^d {k-d \choose q-j} \Bigg( \sum_{\substack {\mu \models jp+b_0 \\ \text{into \textit{d} parts}\\ \text{each} < \text{p}}}
{p \choose \mu _1}{p \choose \mu _2}\cdots {p \choose \mu _d}+$$$$\sum_{\substack {\mu \models jp+b_0 \\ \text{into \textit{d}+1 parts}\\ \text{each} < \text{p}}}
{p \choose \mu _1}{p \choose \mu _2}\cdots {p \choose \mu _d}{a_0 \choose \mu _{d+1}} \Bigg) \Bigg) 
  $$
  $$={(k-1)p+a_0 \choose qp+b_0}+{(k-1)p+a_0 \choose (q-1)p+b_0}$$
\end{thm}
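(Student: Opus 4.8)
The plan is to mirror the strategy used in Theorems \ref{caser=p} and \ref{caser<p}: first collapse the inner double sum over compositions using the already-established Lemma \ref{simp3}, then interchange the order of summation so that the outer variable becomes the ``number of blocks actually used'' and the target binomial coefficient $\binom{(k-h-i)p+a_0}{(j-h)p+b_0}$ appears with a coefficient depending only on $k$, $d$, and the various summation indices. The goal is to reduce everything to an expression whose coefficients are governed by Lemma \ref{simp1} (or its refined cousin Lemma \ref{coeff3}), forcing all terms to vanish except those that reconstruct $\binom{(k-1)p+a_0}{qp+b_0}+\binom{(k-1)p+a_0}{(q-1)p+b_0}$.

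Concretely, first I would apply Lemma \ref{simp3} to replace the bracketed pair of composition-sums by
$$\sum_{h=0}^{j}(-1)^h\binom{d}{h}\sum_{i=0}^{d-h}(-1)^i\binom{d-h}{i}\binom{(d-h-i)p+a_0}{(j-h)p+b_0},$$
so that the left-hand side becomes a quadruple sum over $d,j,h,i$ with the extra Vandermonde-type factor $\binom{k-d}{q-j}$ out front (and the initial stray term $\binom{k}{q}\binom{a_0}{b_0}$, which corresponds to $d=0$ and should be absorbed just as the $d=1$ term was absorbed in Theorem \ref{caser=p}). Next I would substitute $e=d-h$ (the number of ``nonfull'' blocks) and $m=e-i$ (so the top of the surviving binomial is $mp+a_0$), and push the sum over $d$ inside; the $d$-dependence is then concentrated in $\binom{k-1}{d}\binom{d}{h}\binom{d-h}{i}\binom{k-d}{q-j}$, which I expect to simplify via repeated use of the subset-of-a-subset identity $\binom{k-1}{d}\binom{d}{h}=\binom{k-1}{h}\binom{k-1-h}{d-h}$ together with Chu--Vandermonde (Lemma \ref{chuvand}) to handle the $\binom{k-d}{q-j}$ factor. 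The intended outcome is that, after these manipulations, the coefficient of each $\binom{mp+a_0}{sp+b_0}$ is exactly of the shape appearing in Lemma \ref{coeff3} with the role of ``$r$'' played by $q-j$-type shifts, so it is $0$ unless $m=k-1$ and $s\in\{q-1,q\}$, i.e. unless the binomial is one of the two advertised survivors, each then appearing with coefficient $1$.

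The main obstacle I anticipate is the bookkeeping of the interchange of summation together with tracking the ranges: the indices $j$ and $h$ are coupled ($0\le h\le j\le d$), the shifts $(j-h)p+b_0$ must stay in a valid binomial range, and the outer $\binom{k-d}{q-j}$ intertwines the ``block count'' sum with the ``how many blocks are full'' sum $j$. I would handle this by first doing the $j$-sum against $\binom{k-d}{q-j}$ via Chu--Vandermonde to get a clean $\binom{k-d-\text{something}}{q-\text{something}}$, which should be exactly what is needed to feed into Lemma \ref{coeff3} after the reindexing $m=k-1-(\text{running index})$. A secondary subtlety is confirming that the boundary contributions — the $d=0$ term $\binom{k}{q}\binom{a_0}{b_0}$ and the top term $i=d$ (giving $\binom{a_0}{\dots}$ pieces, which are projective and must not survive) — combine correctly, exactly as the analogous $\sum_d\binom{k-1}{d}(-1)^d\binom{a_0}{r}$ telescoped to zero in Theorem \ref{caser<p}. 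Once the coefficient extraction is in the Lemma \ref{coeff3} form, the conclusion is immediate.
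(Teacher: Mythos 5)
Your proposal follows essentially the same route as the paper's proof: absorb the $\binom{k}{q}\binom{a_0}{b_0}$ term as the $d=0$ summand, apply Lemma \ref{simp3} to convert the inner composition-sums into a signed double sum, reindex to extract the coefficient of each $\binom{mp+a_0}{sp+b_0}$, and then invoke Lemma \ref{coeff3} to see that only $m=k-1$ with $s\in\{q-1,q\}$ survives. (One small slip: after applying Lemma \ref{simp3} the surviving binomial is $\binom{(d-h-i)p+a_0}{(j-h)p+b_0}$, not $\binom{(k-h-i)p+a_0}{(j-h)p+b_0}$, and the Chu--Vandermonde collapse of the $j$-sum is carried out inside the proof of Lemma \ref{coeff3} rather than as a separate preliminary step, but neither affects the validity of the approach.)
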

\begin{proof}
Using Lemma \ref{simp3} we have,
$$\sum_{d=0}^{k-1} {k-1 \choose d}\Bigg(\sum_{j=0}^d {k-d \choose q-j} \Bigg( \sum_{\substack {\mu \models jp+b_0 \\ \text{into \textit{d} parts}\\ \text{each} < \text{p}}}
{p \choose \mu _1}{p \choose \mu _2}\cdots {p \choose \mu _d}+$$$$\sum_{\substack {\mu \models jp+b_0 \\ \text{into \textit{d}+1 parts}\\ \text{each} < \text{p}}}
{p \choose \mu _1}{p \choose \mu _2}\cdots {p \choose \mu _d}{a_0 \choose \mu _{d+1}} \Bigg) \Bigg) 
  $$
$$=\sum_{d=0}^{k-1} {k-1 \choose d} \sum_{j=0}^d {k-d \choose q-j}
\sum_{h=0}^{j} (-1)^h{d \choose h} \sum_{i=0}^{d-h} (-1)^i {d-h \choose i} {(d-h-i)p+a_0 \choose (j-h)p+b_0} $$ 
For any $m$ and $s$, the coefficient of $${mp+a_0 \choose sp+b_0}$$ is 
$$(-1)^m \sum_{i=m}^{k-1} (-1)^i {k-1 \choose i} {i \choose m} \sum_{j=0}^{i-m} {i-m \choose j} {k-i \choose q-s-j}  $$
Using Lemma \ref{coeff3}, we can simplify our left hand side expression to $$={(k-1)p+a_0 \choose qp+b_0}+{(k-1)p+a_0 \choose (q-1)p+b_0}$$

\end{proof}

 \textbf{Acknowledgements:} I sincerely acknowledge the constant help and guidance of my advisor Prof. David J. Hemmer. I would like to thank him for a careful reading of this article and his valuable suggestions.\\
 \\
 
\bibliographystyle{plain}
\bibliography{bibliography}

\end{document}